\DeclareMathAlphabet{\mathcal}{OMS}{cmsy}{m}{n}
\newtheoremstyle{theorem}
{6pt +1\p@ -2.0\p@}
{6pt +1\p@ -2.0\p@}
{\it}			      
{}				  
{\bfseries}   
{.}               
{.4em}       
{}               
\newtheorem{definition}{Definition}[section]
\newtheorem{theorem}{Theorem}[section]
\newtheorem{lemma}{Lemma}[section]
\newtheorem{example}{Example}[section]
\newtheorem{corollary}{Corollary}[section]
\numberwithin{equation}{section}
\newcounter{remark}[section]
\newenvironment{remark}
{\refstepcounter{remark}\medskip\noindent{\sc Remark\ \thesection.\theremark:}}{\medskip}
\newcounter{alphatheo}
\renewenvironment{proof}{\medskip\noindent{\sc Proof:}}{\medskip}
\newcommand{\R}{\mathbb R}
\newcommand{\N}{\mathbb N}
\newcommand{\Z}{\mathbb Z}
\def\<{\langle}
\def\>{\rangle}
\newcommand {\be}{\begin{equation}}
	\newcommand {\ee}{\end{equation}}
\newcommand {\bee}{\begin{equation*}}
	\newcommand {\eee}{\end{equation*}}
\newcommand {\baa}{\begin{align*}}
	\newcommand {\eaa}{\end{align*}}
\begin{document}

\title[Stein-Weiss inequality in $L^{1}$ norm for vector fields]{Stein-Weiss inequality in $L^{1}$ norm for vector fields}

\author {Pablo De N\'apoli}
\address{IMAS (UBA-CONICET) and Departamento de Matem\'atica\\
	Facultad de Ciencias Exactas y Naturales - Universidad de Buenos Aires\\
	Ciudad Universitaria - 1428 Buenos Aires, Argentina\\
	pdenapo@dm.uba.ar}

\author {Tiago Picon}
\address{Departamento de  Computa\c{c}\~ao e Matem\'atica, Universidade de S\~ao Paulo, Ribeir\~ao Preto, S\~ao Paulo, 14040-901, Brasil}
\email{picon@ffclrp.usp.br}

\thanks{The first author was supported by ANPCyT under grant PICT-2018-03017, and by Universidad de
Buenos Aires under grant 20020160100002BA. He is a members of
CONICET, Argentina. The second author was supported by CNPq (grant 311430/2018-0) and FAPESP (grant 2018/15484-7).} 

\subjclass[2010]{26D10 (31B10, 35A23, 35R11)}

\keywords{Riesz potential, $L^{1}$ estimates, Stein-Weiss inequality, two-weight inequalities, cocanceling operators.}

\begin{abstract}
In this work, we investigate the limit case $p=1$ of the classical Stein--Weiss inequality for the Riesz potential.
We present a characterization for a special class of vector fields associated to cocanceling operators introduced by Van Schaftingen in \cite{VS}. As an application, we recover some div-curl inequalities found in \cite{BVS,HP, SSV,Z}. 
In addition, we discuss a  two-weight inequality with general weights in the scalar case, extending the previous result of Sawyer  \cite{Saw1, Saw2} to this case.
\end{abstract}

\maketitle


\section{Introduction} 

In the Euclidean space $\R^N$, we consider for $0 < \ell < N$ the classical Riesz potential or fractional integral operator given by   
$$ I_\ell f(x) := \gamma_{N,\ell} \int_{\R^N} \frac{f(y)}{|x-y|^{N-\ell}} \; dy, \quad \quad f \in S(\R^N)$$
with $\displaystyle{\gamma_{N,\ell}=\pi^{\ell-\frac{N}{2}}\Gamma((N-\ell)/{2})/\Gamma({\ell}/{2})}$. 

Two-weight inequalities for this operator have been extensively studied in the literature, starting by the 
pioneering work due to E. Stein and G. Weiss for the case of power weights, that we recall bellow. 

\begin{theorem} (\cite{SW})
Let $N \geq 1$ and $1<p\leq q<\infty$. Assume $\alpha, \beta$ satisfying the conditions 
\begin{itemize}
\item[(i)]  $\displaystyle{\alpha<\frac{N}{p'}}$ and  $\displaystyle{\beta<\frac{N}{q}}$ with $\displaystyle{\frac{1}{p}+\frac{1}{p'}=1}$;
\item[(ii)] $\displaystyle{\alpha + \beta \ge 0}$;
\item[(ii)] $\displaystyle{\frac{1}{q}=\frac{1}{p}+\frac{\alpha+\beta-\ell}{N}}$. 
\end{itemize}
Then there exists $C>0$, depending only the parameters $p,q,\alpha,\beta$, such that 
\begin{equation}\label{ineq1.1}
\||x|^{-\beta} I_\ell f(x) \|_{L^q(\mathbb{R}^N)} \le C \||x|^\alpha f(x) \|_{L^p(\mathbb{R}^N)}, \quad \quad \forall f \in S(\R^N).
\end{equation}
\label{theorem-SW}
\end{theorem}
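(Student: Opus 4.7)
The plan is to reduce the Stein--Weiss inequality to a kernel-decomposition estimate. After the substitution $g(y)=|y|^{\alpha}f(y)$, the target inequality is equivalent to
\[
\|Tg\|_{L^{q}(\R^{N})}\leq C\|g\|_{L^{p}(\R^{N})},\qquad Tg(x):=|x|^{-\beta}I_{\ell}\bigl(|y|^{-\alpha}g\bigr)(x),
\]
whose integral kernel $K(x,y)=\gamma_{N,\ell}|x|^{-\beta}|y|^{-\alpha}|x-y|^{\ell-N}$ behaves differently depending on the relative sizes of $|x|$ and $|y|$. I would split $\R^{N}\times\R^{N}$ into the three regions $A_{1}=\{|y|\leq|x|/2\}$, $A_{2}=\{|y|\geq 2|x|\}$ and $A_{3}=\{|x|/2<|y|<2|x|\}$, and estimate the pieces $T_{i}$ of $T$ associated with each region separately.

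On $A_{1}$ we have $|x-y|\sim|x|$, so $T_{1}g(x)\lesssim|x|^{\ell-N-\beta}\int_{|y|\leq|x|/2}|y|^{-\alpha}g(y)\,dy$. Replacing $g$ by its spherically symmetric decreasing rearrangement and passing to polar coordinates $r=|x|$, the $L^{p}\to L^{q}$ bound for $T_{1}$ reduces to a one-dimensional two-weight Hardy inequality on $(0,\infty)$, whose Muckenhoupt--Bradley criterion is satisfied precisely by the constraint $\alpha<N/p'$ together with the scaling identity linking $1/p$ and $1/q$. The region $A_{2}$ is treated symmetrically by duality, using $\beta<N/q$.

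On $A_{3}$ the weights $|x|^{-\beta}$ and $|y|^{-\alpha}$ are comparable to $|x|^{-(\alpha+\beta)}$, and because $\alpha+\beta\geq 0$ and $|x|\geq|x-y|/3$ on this region, the kernel satisfies the pointwise bound $K(x,y)\lesssim|x-y|^{\ell-\alpha-\beta-N}$. Thus $T_{3}g\lesssim I_{\ell-\alpha-\beta}g$, and the classical Hardy--Littlewood--Sobolev theorem applied with $1/p-1/q=(\ell-\alpha-\beta)/N$ yields the required bound whenever $\ell-\alpha-\beta\in(0,N)$. The borderline $\alpha+\beta=\ell$, which forces $p=q$, reduces $A_{3}$ to a dyadic local estimate for convolution with a locally integrable kernel.

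The main obstacle is the off-diagonal case $p<q$ on $A_{1}$ and $A_{2}$: the one-dimensional Hardy operators arising there act between distinct $L^{p}$ and $L^{q}$ spaces, so the standard $L^{p}\to L^{p}$ Hardy inequality is not enough, and one must invoke the two-weight Hardy inequality with mixed exponents whose Muckenhoupt--Bradley characterization exploits precisely the strict inequalities on $\alpha$ and $\beta$ together with the scaling identity. Once this one-dimensional reduction is carried out, assembling the three estimates completes the proof.
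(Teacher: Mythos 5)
The paper does not prove Theorem~\ref{theorem-SW}; it is quoted verbatim as the classical result of Stein and Weiss, citing \cite{SW}, and serves only as background for the $p=1$ problem treated in the rest of the paper. There is therefore no internal argument here to compare yours against.

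On its own merits your three-region decomposition---two-weight Hardy inequality off the diagonal, HLS on the diagonal---is a correct and standard route, and the exponent arithmetic does check out: the Muckenhoupt--Bradley criterion on $A_1$ collapses to $\alpha<N/p'$ together with the scaling identity, duality with $\beta<N/q$ handles $A_2$, and on $A_3$ the pointwise majorization $K(x,y)\lesssim|x-y|^{\ell-\alpha-\beta-N}$ follows from $|x|\sim|y|\gtrsim|x-y|$ and $\alpha+\beta\geq 0$, with $0\leq\ell-\alpha-\beta<N$ putting you in HLS range whenever $p<q$. Two steps should be tightened before this is a proof. First, on $A_1$ you should not appeal to a rearrangement of $g$: when $\alpha<0$ the factor $|y|^{-\alpha}$ is \emph{increasing} in $|y|$, so the $A_1$-kernel is not symmetric decreasing and the Riesz rearrangement comparison you need does not hold in that regime. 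The clean alternative is to integrate over spheres and apply H\"older on $\mathbb{S}^{N-1}$, which reduces $T_1$ exactly to the one-dimensional Hardy operator $\tilde g\mapsto\int_0^{r/2}\tilde g(s)\,ds$ with weights $w(r)=r^{(\ell-N-\beta)q+N-1}$ and $v(s)=s^{N-1-(N-1-\alpha)p}$; its Muckenhoupt constant is finite precisely when $\alpha<N/p'$ and is independent of $R$ precisely when the scaling identity holds. Second, in the borderline case $\alpha+\beta=\ell$, $p=q$ on $A_3$, the quickest way to close the ``dyadic local estimate'' you gesture at is Schur's test: the truncated kernel $|x|^{-\ell}|x-y|^{\ell-N}\chi_{\{|x|/2<|y|<2|x|\}}$ satisfies $\int K_3(x,y)\,dy\lesssim 1$ uniformly in $x$ and symmetrically in $y$, so the $L^p\to L^p$ bound follows by interpolation between the $L^1$ and $L^\infty$ endpoints with no dyadic machinery needed.
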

We remark that fixed $0<\ell<N$ and $1<p<\infty$, the restriction of the parameters in the previous theorem are sharp.  In fact, condition (i) is necessary to ensure integrability, (ii) follows from the scaling of the inequality (which is a special feature of the power-weights case), whereas the necessity of the assumption $\alpha+\beta \geq 0$ is consequence of a (non trivial) translation argument (see \cite{Saw3}). In the case of radially symmetric functions, this condition can be relaxed for 
$\alpha+\beta \geq (N-1) \left( \frac{1}{q}-\frac{1}{p}\right)$
(see \cite{Rubin, DDD1,D}).    

\medskip

Later on, more general two-weighted inequalities of the form
\begin{equation}\label{two-weight-ineq}
\left( \int_{\R^{N}} | I_\ell f(x)|^{q} \; u(x) \; dx\right)^{1/q} \le C \left( \int_{\R^{N}}|f(x)|^{p}\; v(x) \; dx \right)^{1/p}
\end{equation}
were considered in the literature. E.  Sawyer in  \cite{Saw1, Saw2} gave a characterization of \eqref{two-weight-ineq} in terms of the so-called \emph{testing conditions}. More precisely,  if $0<\ell<N$ and $1<p\leq q<\infty$ then Sawyer proved 
that \eqref{ineq1.1} holds if and only if for every ball $B$ 
\begin{equation}
\left( \int_{B} [I_{\ell}(\chi_{B}u)(x)]^{p'}v(x)dx \right)^{1/p'}\lesssim \footnote{The notation $f \lesssim g $ means that there exists a constant $C>0$ such that $f(x)\leq C g(x)$ for all $x \in \R^N$.}
\left( \int_{B} u(x)dx \right)^{1/q'}
\end{equation} 
and
\begin{equation}
\left( \int_{B} [I_{\ell}(\chi_{B}v)(x)]^{q}u(x)dx \right)^{1/q}\lesssim \left( \int_{B} v(x)dx \right)^{1/p}.
\end{equation} 
A different type of conditions, called \emph{bump-conditions} is also considered in the literature. 
For instance, we mention E. Sawyer and R. Wheeden proved in \cite[Theorem 1]{Saw3} that if $1<p\leq q<\infty$ and for some  $r>1$
\begin{equation}
 |B|^{\ell/N+1/q-1/p} \left( \frac{1}{|B|} \int_{B} [u(x)]^{r}dx \right)^{\frac{1}{rq}}
\left( \int_{B} [v(x)]^{(1-p^\prime)r} \; dx \right)^{\frac{1}{p^\prime r}} \leq C \; \hbox{for all balls}\; B \subset \R^N, 
\label{bump-condition}
\end{equation}
then \eqref{two-weight-ineq} holds. Whereas this condition is not necessary for \eqref{two-weight-ineq} to hold, it does not involve the operator $I_\ell$ itself, and it is  ``almost necessary'' in 
the sense that the validity of\eqref{two-weight-ineq} implies the analogue of \eqref{bump-condition} with $r=1$. The Stein--Weiss result 
(Theorem \ref{theorem-SW}) follows as a special case. Slightly more general bump conditions involving Orlicz norms were developed by C.
P\'erez in \cite{P}. We also refer the reader to \cite{CU} for a survey of the theory from the view-point of sparse majorization.

\medskip

We remark however that the Stein-Weiss inequality has some special features that do not hold for more general weights, 
notably the rotational and scaling invariance. These features for instance play a key role in proving the existence of extremal functions in some cases \cite{CT, DDS, Lieb}, or in proving improved versions for radial functions by the method of \cite{DDD1} or the more general inequalities with mixed norms in \cite{DL}. Other proofs of the 
Stein-Weiss inequality and generalizations to different contexts are presented in \cite{Beckner, CLLT, Dou, GGSS, KRS, Wang}. 
\medskip 

A natural question arises on the validity of the inequalities \eqref{ineq1.1} and \eqref{two-weight-ineq} for $p=1$. 
However the inequality \eqref{ineq1.1}  breaks down for $p=1$ and  $\alpha=0$ for instance. Indeed, let $\varphi \in C_{c}^{\infty}(\R^N)$ be a positive smooth function supported on the unit ball $B=B(0,1)$ such that $\int_{\R^{N}} \varphi(x)dx=1$ and for each $\epsilon > 0$ consider $\varphi_{\epsilon}(x):=\epsilon^{-N}\varphi(\epsilon^{-1}x)$. Then applying \eqref{ineq1.1} for $\varphi_{\epsilon}$ and using the scaling invariance, we have  
\begin{equation}\nonumber
\||x|^{-\beta} I_\ell \varphi_{\epsilon} \|_{L^q(\mathbb{R}^N)} \le C \| \varphi_{\epsilon}\|_{L^1(\mathbb{R}^N)} = C, 
\end{equation}
uniformly. Taking $\epsilon \searrow 0$ we know that $I_{\ell}{\varphi_{\epsilon}}(x) \rightarrow |x|^{-N+\ell}$ almost everywhere that implies $\||x|^{-\beta+N-\ell}  \|_{L^q(\mathbb{R}^N)}\lesssim 1$, a  contradiction.  

\medskip 

Although Stein-Weiss inequality fails for $p=1$ and $\alpha=0$, some progress is still possible. D`Ancona and Luca in \cite[Theorem 1.3]{DL} (with special parameters $p=\tilde{p}=1$ and $q=\tilde{q}$), proved the validity of \eqref{ineq1.1} for $p=1$ when $\alpha+\beta>0$ and $\alpha<0$, or assuming some control of support of the Fourier transform $\widehat{f}$ on an annulus.    

\medskip

In this note we present new perspectives on the classical formulation of Stein-Weiss and Sawer`s inequality when $p=1$. Our first result is a characterization of two-weight in case $p=1$, that can be thought as a limit case 
of Sawyer's result. 
%


\begin{theorem}\label{thm1.1} 
Let $0<\ell<N$, $1<q<\infty$ and $u(x),v(x)$ nonnegative functions in $L^{1}_{loc}(\R^N)$. The inequality
\begin{equation}\label{ineq1.2}
\left( \int_{\R^{N}} | I_\ell f(x)|^{q} \; u(x)dx\right)^{1/q} \le C \int_{\R^{N}}
|f(x)| \; v(x) \; dx
\end{equation}
holds for any $f\in C_{c}^{\infty}(\mathbb{R}^N)$ and some $C>0$ 
independent of $f$  if and only if there exists $\widetilde{C}>0$ such that 
\begin{align}\label{hipotese}
\left( \int_{\R^N} \frac{u(x)}{|x-y|^{(N-\ell)q}}  \,dx \right)^{1/q} \leq \widetilde{C} \; v(y), \quad \quad \forall \;\; \text{for almost } \; y \in \R^N.
\end{align}
\end{theorem}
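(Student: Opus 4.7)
The proof naturally splits into two directions. For sufficiency, the plan is to apply Minkowski's integral inequality with respect to the measure $u(x)\,dx$ to the representation
\[
I_\ell f(x) = \gamma_{N,\ell} \int_{\R^N} \frac{f(y)}{|x-y|^{N-\ell}}\,dy,
\]
viewing it as an $L^q(u\,dx)$-valued integral in the variable $y$. This yields
\[
\left(\int_{\R^N} |I_\ell f(x)|^q u(x)\,dx\right)^{1/q}
\leq \gamma_{N,\ell}\int_{\R^N}|f(y)|\left(\int_{\R^N}\frac{u(x)}{|x-y|^{(N-\ell)q}}\,dx\right)^{1/q}dy,
\]
and the assumption \eqref{hipotese} replaces the inner factor by $\widetilde{C}\,v(y)$, giving \eqref{ineq1.2} with $C=\gamma_{N,\ell}\widetilde{C}$.

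For the necessity, the idea is to test \eqref{ineq1.2} against approximations of the Dirac mass. Pick a nonnegative $\varphi\in C_c^\infty(\R^N)$ supported in the unit ball with $\int\varphi=1$, set $\varphi_\epsilon(x)=\epsilon^{-N}\varphi(x/\epsilon)$, and for each fixed $y\in\R^N$ apply \eqref{ineq1.2} to $f_\epsilon(x):=\varphi_\epsilon(x-y)\in C_c^\infty(\R^N)$. The right-hand side is $C\int \varphi_\epsilon(x-y)v(x)\,dx$, which by the Lebesgue differentiation theorem (valid since $v\in L^1_{\rm loc}$) tends to $C\,v(y)$ for almost every $y$. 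On the left-hand side, continuity of $z\mapsto |x-z|^{\ell-N}$ away from $z=x$ together with the local integrability of this kernel imply the pointwise convergence
\[
I_\ell f_\epsilon(x)=\gamma_{N,\ell}\int\frac{\varphi_\epsilon(z-y)}{|x-z|^{N-\ell}}\,dz \longrightarrow \gamma_{N,\ell}|x-y|^{\ell-N}
\]
for every $x\neq y$. Fatou's lemma then yields
\[
\gamma_{N,\ell}^q\int_{\R^N}\frac{u(x)}{|x-y|^{(N-\ell)q}}\,dx \leq \liminf_{\epsilon\to 0}\int_{\R^N}|I_\ell f_\epsilon(x)|^q u(x)\,dx \leq C^q v(y)^q
\]
for almost every $y$, which is precisely \eqref{hipotese} with $\widetilde{C}=C/\gamma_{N,\ell}$.

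The main technical point is ensuring that the two limits used in the necessity argument hold at a common almost-every $y$; this is straightforward because both exceptional sets (the Lebesgue null set for $v$, and the trivially null set $\{x=y\}$ appearing in the Fatou step) have measure zero. Apart from this, the argument is essentially a soft one: Minkowski handles one direction, and duality between ``average of $v$ near $y$'' and ``$I_\ell$ of an approximate identity at $y$'' handles the other. The symmetry between the two roles of $v(y)$ and the tail integral $\bigl(\int u(x)|x-y|^{-(N-\ell)q}\,dx\bigr)^{1/q}$ is what makes the characterization sharp.
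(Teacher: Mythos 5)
Your proof is correct, and your sufficiency argument is genuinely simpler than the paper's. The paper splits the kernel $\gamma_{N,\ell}|x-y|^{\ell-N}$ into a piece $K_1(x,y)=\psi(y/|x|)K(x,0)$, supported where $2|y|\le|x|$, and a remainder $K_2$; it handles $K_1$ by reducing to a separate one--dimensional weighted inequality (Lemma~\ref{weight1}) and $K_2$ by Minkowski. You instead apply Minkowski's integral inequality to the whole kernel in a single step, and condition \eqref{hipotese} is precisely the statement that the $L^q(u\,dx)$--norm of $|\cdot-y|^{\ell-N}$ is controlled by $v(y)$, so the bound follows at once. For Theorem~\ref{thm1.1} the decomposition buys nothing; its real purpose is in Section~3 (Lemma~\ref{main}), where the cocancelling constraint $L(D)f=0$ produces, via Lemma~\ref{mainlemma}, an extra factor of order $|y|/|x|$ only on the $K_1$ piece, and that gain is exactly what makes the positive power weight $|x|^{\alpha}$ with $0<\alpha<1$ admissible. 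In the scalar setting of Theorem~\ref{thm1.1} no such gain exists or is needed, so your direct use of Minkowski is the natural argument. Your necessity proof (apply \eqref{ineq1.2} to a mollified Dirac mass at $y$, use Lebesgue differentiation on the right and Fatou's lemma on the left) is essentially identical to the paper's, though you are more explicit about invoking Fatou and about the two a.e.\ exceptional sets.
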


As a consequence, we recover the result due to D`Ancona and Luca in \cite{DL} for the case $p=1$, namely:

\begin{corollary} 
Let $\beta<N/q$,  $\alpha+\beta>0$ and $\displaystyle{{1}/{q}=1+ ({\alpha+\beta-\ell})/{N}}$. If $\alpha<0$
then the Stein-Weiss inequality \eqref{ineq1.1} holds.
\end{corollary}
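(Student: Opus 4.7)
The plan is to invoke Theorem \ref{thm1.1} directly with the power weights $u(x)=|x|^{-\beta q}$ and $v(x)=|x|^{\alpha}$, so that the task reduces to verifying the pointwise testing condition \eqref{hipotese}. That is, I would need to show
\begin{equation*}
\left(\int_{\R^N} \frac{|x|^{-\beta q}}{|x-y|^{(N-\ell)q}}\,dx\right)^{1/q} \lesssim |y|^{\alpha} \quad \text{for a.e. } y \in \R^N.
\end{equation*}

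First, I would exploit the natural scaling and rotational symmetries of the integral. Writing $e=y/|y|$ and performing the change of variables $x=|y|z$, the left-hand side becomes
\begin{equation*}
|y|^{\,N/q-\beta-(N-\ell)}\left(\int_{\R^N} \frac{|z|^{-\beta q}}{|z-e|^{(N-\ell)q}}\,dz\right)^{1/q}.
\end{equation*}
Using the hypothesis $1/q=1+(\alpha+\beta-\ell)/N$, a direct computation gives $N/q-\beta-(N-\ell)=\alpha$, so the scaling exponent matches exactly. By rotational invariance the remaining integral does not depend on $e$, and the whole problem collapses to proving that
\begin{equation*}
J:=\int_{\R^N} \frac{|z|^{-\beta q}}{|z-e|^{(N-\ell)q}}\,dz < \infty
\end{equation*}
for a fixed unit vector $e$.

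Next I would analyze the three potential singularities of $J$ separately. Near $z=0$, the integrand behaves like $|z|^{-\beta q}$, which is locally integrable precisely when $\beta<N/q$; this is the first hypothesis. Near $z=e$, the integrand behaves like $|z-e|^{-(N-\ell)q}$, integrable exactly when $(N-\ell)q<N$, which by the scaling relation rewrites as $\alpha+\beta>0$; this is the second hypothesis. At infinity, the integrand decays like $|z|^{-(\beta+N-\ell)q}$, so integrability requires $(\beta+N-\ell)q>N$, which again by the scaling identity is equivalent to $\alpha<0$; this is the third hypothesis. Thus the three assumptions of the corollary correspond bijectively to the three integrability conditions at $0$, $e$, and $\infty$, so $J<\infty$.

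There is no real obstacle in this argument; the only point requiring care is the bookkeeping that matches the exponent algebra to the hypotheses via the scaling identity $1/q=1+(\alpha+\beta-\ell)/N$. Once this dictionary is set, the corollary follows by combining the finiteness of $J$ with Theorem \ref{thm1.1}.
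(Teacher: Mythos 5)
Your argument is correct and follows essentially the same route as the paper: apply Theorem~\ref{thm1.1} with $u(x)=|x|^{-\beta q}$, $v(x)=|x|^{\alpha}$, and verify the testing condition \eqref{hipotese}. The only difference is cosmetic: the paper bounds the integral directly by splitting $\R^N$ into $B_{2|y|}$ and its complement, whereas you first factor out the $|y|$-dependence by scaling and then check integrability of a single fixed integral near $0$, near $e$, and at infinity; both computations verify the same inequality and invoke the same three hypotheses.
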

Indeed, it is sufficient to verify that the weights $u(x):=|x|^{-\beta}$ and $v(x):=|x|^{\alpha}$ satisfy \eqref{hipotese}. In fact,
for $B_{R}:=B(0,R) $ we have that
\begin{align*}
\int_{\R^{N}}\frac{|x|^{-\beta q}}{|x-y|^{(N-\ell)q}}\,dx & \leq 
\int_{B_{2|y|}}\frac{|x|^{\alpha q}}{|x-y|^{(N-\ell)q}|x|^{N-(N-\ell)q}}\,dx
           + \int_{\R^{N} \backslash B_{2|y|}}{|x|^{\alpha q-N}}\,dx
& \lesssim |y|^{\alpha q},
\end{align*}
where the bound on the integral term on $B_{2|y|}$ follows from $N-(N-\ell)q=(\alpha+\beta)q>0$ and whereas the
condition $\alpha<0$ was used to bound the integral on $\R^N \backslash B_{2|y|}$. Thus, the desired conclusion follows.
\bigskip

Let us return to the classical inequality Stein-Weiss inequality with power weights \eqref{ineq1.1} which can be written as 
\be \left( \int_{\R^N} \left| \int_{\R^N} K(x,y)f(y) \; dy \right|^q |x|^{-\beta q} \;dx  \right)^{1/q} 
\leq C  \left( \int_{\R^N} |f(x)|^p |x|^{\alpha p} \; dx \right)^{1/p} \label{weighted1}\ee
where $K(x,y):=\gamma_{N,\ell} |x-y|^{-N+\ell}$ and $0<\ell<N$. Inequalities of this type for standard kernels $K(x,y)$ (see \eqref{ka} and \eqref{kb} below) were recently studied by Hounie and Picon in \cite{HP} for $N \geq 2$, $p=1$ and $\alpha=0$ in the setting of vector fields where $f(x)=A(D)u(x)\in C_{c}^{\infty}(\R^{N};F)$ for $u \in C_{c}^{\infty}(\R^{N};E)$ and $A(D)$ is an elliptic and canceling linear homogeneous differential operators of order $\nu$ with constant coefficients from a finite dimensional complex vector space $E$ to a finite dimensional complex vector space $F$. Here we recall that $A(D)$ is said to be elliptic if the symbol $A(\xi)$ is one-one for $\xi \in \R^{N} \backslash \left\{0 \right\}$ and that the canceling property means that
 \begin{equation}\label{canceling}
 \displaystyle{\bigcap_{\xi \in \R^{N}\backslash 0}\,A(\xi)[E]=\left\{ 0 \right\}}.
 \end{equation}
The rich theory of canceling operators is due to Van Schaftingen in \cite{VS}, where he characterizes the following Sobolev-Gagliardo-Nirenberg inequality
$$ \| D^{\nu-1} \; u \|_{L^{N/(N-1)}} \leq \| A(D) \; u \|_{L^1}, $$ 
for elliptic operators covering in particular several interesting a priori estimates for vector fields with divergence free and chain complexes (see also \cite{VS2}).   

One fundamental property of ellipitc and canceling operators $A(D)$ is the existence of a linear homogeneous differential operators $L(D): C^{\infty}(\R^N;F) \rightarrow C^{\infty}(\R^N;V)$ of order $m$ for some finite dimensional complex vector space $V$ such that 
$$\displaystyle{\bigcap_{\xi \in \R^{N}\backslash 0}\,ker \,L(\xi)\,=\,\bigcap_{\xi \in \R^{N}\backslash 0}\,A(\xi)[E]=\left\{ 0 \right\}}.$$ 
The following definition that was also introduced by Van Schaftingen in \cite{VS} is fundamental to our work:
\begin{definition}\label{de1.1}
 Let $L(D)$ be a homogeneous linear differential operators of order m on $\R^{N}$ from $F$ to $V$.
The operator $L(D)$ is cocanceling if 
$$\displaystyle{\bigcap_{\xi \in \R^{N}\backslash \left\{ 0 \right\}}ker\,L(\xi)=\left\{ 0 \right\}}.$$ 
\end{definition}
A simple example of cocanceling operator on $\R^N$ from $F=\R^N$ to $V=\R$ is the divergence operator $L(D)=div$. Indeed, for every $e \in \R^N$ we have $L(\xi)[e]=\xi \cdot e$ and then clearly 
$$\bigcap_{\xi \in \R^{N}\backslash \left\{ 0 \right\}} ker\,L(\xi)=\bigcap_{\xi \in \R^{N}\backslash \left\{ 0 \right\}} \xi^{\perp}=\left\{ 0 \right\}.$$

\medskip

Our second and main result in this paper is a characterization of the Stein-Weiss inequality when $p=1$ for vector fields associated to the kernel of cocanceling operators.

\begin{theorem}\label{main1}
Suppose $N\geq 2$, $0<\ell<N$, $0 \leq \alpha<1$, $\beta<N/q$, $\alpha+\beta>0$ and \\ $\displaystyle{\frac{1}{q}=1+ \frac{\alpha+\beta-\ell}{N}}$. Then if $L(D)$ is cocanceling  there exists $C>0$ such that 
\begin{equation}\label{main5}
 \left( \int_{\R^N} \left| I_{\ell}f(x) \right|^q |x|^{-\beta q} \;dx  \right)^{1/q} 
\leq C \int_{\R^{N}} |x|^{\alpha}|f(x)|dx, 
\end{equation}
for all $f \in C^\infty_c(\R^N;F)$ such that $L(D)f=0$ in the sense of distributions. Conversely, if for all $f \in C^\infty_c(\R^N;F)$ satisfying $L(D)f=0$ the inequality \eqref{main5} holds for $\alpha=0$ then $L(D)$ has to be cocanceling. 
\end{theorem}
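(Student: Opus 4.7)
The plan is to handle the converse by a test-function argument adapted to the subspace $\ker L(D)$, and to treat the forward direction by applying Theorem~\ref{thm1.1} to a modified kernel that exploits the zeroth-moment cancellation coming from cocanceling.

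For the converse, suppose $L(D)$ is not cocanceling. Then there exists $e \in F \setminus \{0\}$ with $L(\xi)e = 0$ for every $\xi \in \R^N$, and since $L(\xi) = \sum_{|\gamma|=m} L_\gamma (i\xi)^\gamma$, comparing coefficients shows $L_\gamma e = 0$ for each $\gamma$; hence $L(D)(e\,\varphi) = 0$ for every scalar $\varphi \in C_c^\infty(\R^N)$. Taking $\varphi \in C_c^\infty$ with $\int \varphi = 1$ and $\varphi_\epsilon(x) := \epsilon^{-N}\varphi(\epsilon^{-1}x)$, the function $f_\epsilon := e\,\varphi_\epsilon$ lies in $\ker L(D) \cap C_c^\infty$. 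Applying \eqref{main5} with $\alpha = 0$ yields $\||x|^{-\beta} I_\ell f_\epsilon\|_{L^q} \leq C|e|$ uniformly in $\epsilon > 0$. Since $I_\ell \varphi_\epsilon(x) \to \gamma_{N,\ell}|x|^{\ell-N}$ pointwise a.e.\ as $\epsilon \searrow 0$, Fatou's lemma forces $\||x|^{\ell-N-\beta}\|_{L^q(\R^N)} < \infty$, which is impossible as $|x|^{\ell-N-\beta}$ is homogeneous of degree $-N/q$; this is exactly the obstruction discussed just before Theorem~\ref{thm1.1}, now produced inside the class $\ker L(D)$.

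For the forward direction, the starting point is the key property of cocanceling operators from Van Schaftingen \cite{VS}: if $L(D)$ is cocanceling, then every $f \in C_c^\infty(\R^N; F)$ with $L(D)f = 0$ satisfies $\int_{\R^N} f(y)\,dy = 0$ (combine continuity of $\hat f$ at the origin with $\hat f(\xi) \in \ker L(\xi)$ for $\xi \neq 0$ and Definition~\ref{de1.1}). Using this, I rewrite, for $x \neq 0$,
\[
I_\ell f(x) = \gamma_{N,\ell}\int_{\R^N} f(y)\,K(x,y)\,dy, \qquad K(x,y) := \frac{1}{|x-y|^{N-\ell}} - \frac{1}{|x|^{N-\ell}},
\]
since the subtracted term contributes $|x|^{\ell-N}\int f\,dy = 0$. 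The plan is then to verify the analog of hypothesis \eqref{hipotese} for the nonnegative kernel $|K(x,y)|$, namely
\[
\left(\int_{\R^N} |K(x,y)|^{q}\,|x|^{-\beta q}\,dx\right)^{1/q} \leq \widetilde C\,|y|^{\alpha} \qquad\text{for a.e. } y \in \R^N,
\]
and to conclude \eqref{main5} via Minkowski's integral inequality applied to $|I_\ell f(x)| \leq \gamma_{N,\ell}\int |f(y)|\,|K(x,y)|\,dy$. This is the same mechanism that powers the sufficiency in Theorem~\ref{thm1.1} and works for any nonnegative kernel, so no extension of that theorem is needed beyond verifying the pointwise bound.

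Verifying the kernel bound splits naturally at the scale $|x| \sim |y|$. On $\{|x| \geq 2|y|\}$, Taylor expansion in $y$ of the $C^1$ function $x \mapsto |x-y|^{\ell-N}-|x|^{\ell-N}$ gives $|K(x,y)| \lesssim |y|/|x|^{N-\ell+1}$; the resulting integral converges at infinity exactly under $\alpha < 1$, since the scaling $1/q = 1 + (\alpha+\beta-\ell)/N$ yields $(N-\ell+1+\beta)q = N + (1-\alpha)q > N$, producing $O(|y|^{\alpha q})$. On $\{|x| < 2|y|\}$ one crudely bounds $|K(x,y)| \leq |x-y|^{\ell-N} + |x|^{\ell-N}$: the first summand contributes $C|y|^{\alpha q}$ by the substitution $x = |y|z$ exactly as in the Corollary following Theorem~\ref{thm1.1}, while the second integrates against $|x|^{-\beta q}$ over $\{|x| < 2|y|\}$ to $C|y|^{\alpha q}/(\alpha q)$, since $(\ell-N-\beta)q + N = \alpha q$ by scaling. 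I expect the main obstacle to be the endpoint $\alpha = 0$, where this last integral diverges logarithmically at the origin: the remedy I envision is to replace the constant subtraction $|x|^{\ell-N}$ with a smoothly truncated version $\eta(x)|x|^{\ell-N}$ (with $\eta \in C^\infty$ vanishing near $0$, which still preserves cancellation since $\int f\,dy = 0$) and to close the estimate either by a dyadic refinement near $\{|x| = 1\}$ or by a limiting argument transferring the uniform-in-$f$ bounds obtained for $\alpha \in (0,1)$ down to $\alpha = 0$.
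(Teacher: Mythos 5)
Your converse argument is correct and in fact more direct than the paper's (which adapts the $p_\lambda$-construction of Bousquet--Van~Schaftingen): once you note that $L(\xi)e = 0$ for every $\xi$ forces each coefficient $L_\gamma e = 0$, so that $e\,\varphi_\epsilon \in \ker L(D) \cap C_c^\infty(\R^N;F)$, the Fourier-truncation step in the paper becomes unnecessary and the contradiction follows from Fatou exactly as you say.

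The forward direction, however, has a genuine gap at $\alpha = 0$, which you flag but do not close, and neither of your proposed fixes works. The deeper issue is that your argument uses only the zeroth-moment cancellation $\int f\,dy = 0$, and at $\alpha = 0$ this is strictly weaker than the cocanceling hypothesis. Indeed, take $f_\epsilon(x) := \varphi_\epsilon(x - e_1) - \varphi_\epsilon(x)$: then $\int f_\epsilon = 0$ and $\|f_\epsilon\|_{L^1} \leq 2$, but $I_\ell f_\epsilon(x) \to \gamma_{N,\ell}\left(|x-e_1|^{\ell-N} - |x|^{\ell-N}\right)$ pointwise, and near the origin the $|x|^{\ell-N}$ piece forces $\int_{|x|<1/2}|x|^{(\ell-N-\beta)q}\,dx = \int_{|x|<1/2}|x|^{-N}\,dx = \infty$ (since $(\ell-N-\beta)q = \alpha q - N = -N$ when $\alpha=0$). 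So no kernel bound that rests solely on $\int f = 0$ can hold at $\alpha=0$; this is exactly the divergence in your second summand over $\{|x|<2|y|\}$. Your fixes do not remove it: replacing $|x|^{\ell-N}$ by $\eta(x)|x|^{\ell-N}$ with $\eta$ vanishing near the origin relocates the trouble to the annulus $\{2|y|<|x|<1\}$ for small $|y|$, where the un-truncated kernel $|x-y|^{\ell-N}$ has no cancellation and $\int_{2|y|<|x|<1}|x|^{-N}\,dx \approx \log(1/|y|)$ is unbounded; and the $\alpha \to 0^+$ limit fails because the constant in your Schur bound blows up like $(\alpha q)^{-1/q}$.

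What the paper does at $\alpha=0$ is \emph{not} a Schur test on the problematic piece. The splitting $K = K_1 + K_2$ with $K_1(x,y) = \psi(y/|x|)K(x,0)$ confines the $|x|^{\ell-N}$ contribution to the region $\{|y| < |x|/2\}$; the Schur test is applied only to $K_2$, where it works for all $0 \leq \alpha < 1$. For $K_1$, Lemma~\ref{mainlemma} uses the full order-$m$ cocanceling structure to obtain $\left|\int \psi(y/|x|)\eta\cdot f(y)\,dy\right| \lesssim \int_{B(0,|x|/2)}(|y|/|x|)\,|f(y)|\,dy$, a \emph{restricted} integral carrying the extra factor $|y|/|x|$, and then the weighted Hardy-type inequality of Lemma~\ref{weight1} (with $\tilde u(x) = |x|^{-N-(1-\alpha)q}$, $\tilde v(y) = |y|^{\alpha-1}$) closes the estimate for all $0 \leq \alpha < 1$. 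That Hardy argument sees the support restriction to $B(0,|x|/2)$, which a Schur/Minkowski bound discards. For $0 < \alpha < 1$ your simpler argument does succeed, and it shows that only $\int f = 0$ is needed in that range --- consistent with the paper's remark that the necessity of cocanceling is only known for $\alpha = 0$.
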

We remark that an interesting open question that remains is whether or not the converse of \eqref{main5} holds for $0<\alpha<1$.

\bigskip

The paper is organized as follows.  Section \ref{two} is devoted to the proof of Theorem \ref{thm1.1}, in special we state some (possible well known) two-weight inequalities which could not find the proofs in the literature. The proof of  inequality \eqref{main5} of Theorem \ref{main1} is presented 
in Section \ref{sec3} that is consequence of a short improvement of \cite[Lemma 2.2 ]{BVS}  announced at Lema \ref{mainlemma} and the Fundamental Lema \ref{main}.  In Section \ref{sec4}, we show that the cocancelation property is necessary for $\alpha=0$ and in the Section \ref{sec5} we present a simple counter example for vector fields satisfying $div \,\vec{f}=0$ i.e. belonging to the kernel of a cocanceling operator such that \eqref{main5} fails to $\alpha=1$. Finally, in Section \ref{sec6}, we discuss some applications of Theorem \ref{main1} unifying and extending some results due to Hounie \& Picon \cite{HP}, Zhang \cite{Z} and Schikorra \& Spector \& Van Schaftnigen \cite{SSV}.

\section{Two-weight Stein-Weiss inequality in $L^1$ norm}\label{two}

This section is devoted to the proof of Theorem \ref{thm1.1}. The main ingredient is a weighted inequality that we state in a general setting.

\begin{lemma}\label{weight1}
Let $g \geq 0$ and $1 \leq q <\infty$. Then 
\begin{equation}\label{w1}
\left[ \int_{\R^{N}} \left(\int_{B(0,|x|/2)}g(z)dz \right)^{q}\widetilde{u}(x)dx \right]^{1/q} \leq C \int_{\R^N} g(x)\widetilde{v}(x)dx
\end{equation}
holds if and only if 
\begin{equation}\label{w2}
\tilde{A}:=\sup_{R>0}\left( \int_{B^{c}(0,R)}\widetilde{u}(x)dx \right)^{1/q}\left( \sup_{x \in B(0,R)}\widetilde{v}^{\,\,-1}(x) \right)<\infty.
\end{equation}
Analogously 
\begin{equation}\label{w3}
\left[ \int_{\R^{N}} \left(\int_{B^c(0,|x|/2)}g(z)dz \right)^{q}\widetilde{u}(x)dx \right]^{1/q} \leq C \int_{\R^N} g(x)\widetilde{v}(x)dx
\end{equation}
holds if and only if 
\begin{equation}\label{w4}
\tilde{A}:=\sup_{R>0}\left( \int_{B(0,R)}\widetilde{u}(x)dx \right)^{1/q}\left( \sup_{x \in B^{c}(0,R)}\widetilde{v}^{\,\,-1}(x) \right)<\infty.
\end{equation}
\end{lemma}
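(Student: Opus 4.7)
The plan is to prove both equivalences by a single recipe: Minkowski's integral inequality for the sufficient direction, and testing \eqref{w1} (resp. \eqref{w3}) against mollifiers centered at Lebesgue points of $\widetilde{v}$ for the necessary direction. For concreteness I focus on \eqref{w1}$\Leftrightarrow$\eqref{w2}; the pair \eqref{w3}$\Leftrightarrow$\eqref{w4} is entirely symmetric after interchanging the roles of $B(0,R)$ and $B^{c}(0,R)$.

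For the sufficient direction I would rewrite $\chi_{B(0,|x|/2)}(z)=\chi_{\{|x|>2|z|\}}$ and apply Minkowski's integral inequality (valid since $q\geq 1$) to obtain
\begin{equation*}
\left[\int_{\R^{N}}\Bigl(\int_{B(0,|x|/2)}g(z)\,dz\Bigr)^{q}\widetilde{u}(x)\,dx\right]^{1/q}\le \int_{\R^{N}}g(z)\left(\int_{|x|>2|z|}\widetilde{u}(x)\,dx\right)^{1/q}dz.
\end{equation*}
It then remains to deduce from $\tilde{A}<\infty$ the pointwise bound $\bigl(\int_{|x|>2|z|}\widetilde{u}\,dx\bigr)^{1/q}\le \tilde{A}\,\widetilde{v}(z)$ for a.e.~$z$. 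Indeed, for every $R>2|z|$ one has $z\in B(0,R)$, so $\widetilde{v}^{-1}(z)\le \sup_{B(0,R)}\widetilde{v}^{-1}\le \tilde{A}\bigl(\int_{|x|>R}\widetilde{u}\bigr)^{-1/q}$, and letting $R\downarrow 2|z|$ (monotone convergence, using that spheres have Lebesgue measure zero) yields the claim. Substituting back gives \eqref{w1} with $C=\tilde{A}$.

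For the necessary direction I would fix a Lebesgue point $z_{0}\neq 0$ of $\widetilde{v}$ and test \eqref{w1} with the mollifier $g_{\varepsilon}(z):=\varepsilon^{-N}\varphi((z-z_{0})/\varepsilon)$, where $\varphi\in C_{c}^{\infty}(B(0,1))$ is nonnegative with $\int\varphi=1$. When $|x|>2(|z_{0}|+\varepsilon)$ the support of $g_{\varepsilon}$ lies inside $B(0,|x|/2)$, hence $\int_{B(0,|x|/2)}g_{\varepsilon}\,dz=1$; passing to the limit $\varepsilon\to 0^{+}$ (Fatou on the left, Lebesgue differentiation on the right) produces
\begin{equation*}
\left(\int_{|x|>2|z_{0}|}\widetilde{u}(x)\,dx\right)^{1/q}\le C\,\widetilde{v}(z_{0})
\end{equation*}
at every such point. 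Since $R\mapsto\int_{|x|>R}\widetilde{u}$ is non-increasing, for any $R>0$ and a.e.~$z\in B(0,R)$ we obtain $\bigl(\int_{|x|>2R}\widetilde{u}\bigr)^{1/q}\le \bigl(\int_{|x|>2|z|}\widetilde{u}\bigr)^{1/q}\le C\,\widetilde{v}(z)$; taking the essential supremum of $\widetilde{v}^{-1}$ over $B(0,R)$ and relabeling $2R\mapsto R$ absorbs the doubling into the constant and yields $\tilde{A}\lesssim C$.

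The cleanest step is the Minkowski bound; the main technical subtlety I anticipate is the factor-of-two discrepancy between the support condition $|z|<|x|/2$ in \eqref{w1} and the sharp split $B(0,R)\cup B^{c}(0,R)$ defining $\tilde{A}$, reconciled throughout by the monotonicity of $R\mapsto\int_{|x|>R}\widetilde{u}$. For \eqref{w3}$\Leftrightarrow$\eqref{w4} the argument is identical once one replaces $\chi_{\{|x|>2|z|\}}$ by $\chi_{\{|x|<2|z|\}}$: Minkowski then delivers $\bigl(\int_{|x|<2|z|}\widetilde{u}\bigr)^{1/q}$ in place of $\bigl(\int_{|x|>2|z|}\widetilde{u}\bigr)^{1/q}$, and the roles of $B(0,R)$ and $B^{c}(0,R)$ in the characterization are swapped accordingly.
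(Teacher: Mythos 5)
Your Minkowski argument for the sufficient direction is exactly the paper's. For the necessary direction, however, you take a genuinely different route: you test \eqref{w1} with mollifiers concentrated at Lebesgue points of $\widetilde v$ to obtain the pointwise condition $\bigl(\int_{|x|>2|z|}\widetilde u\bigr)^{1/q}\le C\,\widetilde v(z)$ a.e., and then try to pass to the supremum. The paper instead tests with characteristic functions $\chi_{M_n}$ of positive-measure subsets $M_n\subseteq\{x\in B(0,R):\widetilde v^{-1}(x)>S(R)-1/n\}$ and lets $n\to\infty$, aiming directly at the quantity $S(R)=\sup_{B(0,R)}\widetilde v^{-1}$ without going through a pointwise intermediary. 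Your mollifier approach has the advantage of isolating exactly which pointwise condition is equivalent to \eqref{w1}; the paper's level-set approach avoids the Lebesgue-point machinery.

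That said, there is a real gap in your final step, and you have already put your finger on it. After Fatou plus Lebesgue differentiation you obtain, for a.e.\ $z$, the inequality $\bigl(\int_{|x|>2|z|}\widetilde u\bigr)^{1/q}\le C\,\widetilde v(z)$, hence for each $R>0$, $\bigl(\int_{|x|>2R}\widetilde u\bigr)^{1/q}\bigl(\operatorname*{ess\,sup}_{B(0,R)}\widetilde v^{-1}\bigr)\le C$. The claim that ``relabeling $2R\mapsto R$ absorbs the doubling into the constant'' is \emph{not} justified: for general weights there is no comparison between $\int_{|x|>R}\widetilde u$ and $\int_{|x|>2R}\widetilde u$, and indeed a weight $\widetilde u$ concentrated on a single annulus $\{R_0<|x|<2R_0\}$ together with $\widetilde v$ small on a set inside $\{R_0/2<|z|<R_0\}$ but away from $B(0,R_0/2)$ satisfies \eqref{w1} with bounded constant while violating \eqref{w2}. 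In other words, \eqref{w1} is equivalent to the pointwise condition you derive, which is in general strictly weaker than \eqref{w2}. You should be aware that the paper's own proof of necessity silently commits the same error: it writes $\int_{B(0,|x|)}\chi_{M_n}\,dz$ when applying \eqref{w1}, whereas the hypothesis only provides $\int_{B(0,|x|/2)}\chi_{M_n}\,dz$; the correct conclusion is likewise $\bigl(\int_{B^c(0,2R)}\widetilde u\bigr)^{1/q}S(R)\le C$. This discrepancy is harmless for the power weights used in the paper's applications (where a factor of $2$ in $R$ changes nothing up to constants), but it means the ``if and only if'' in the lemma does not hold for general $\widetilde u,\widetilde v$; only the direction \eqref{w2} $\Rightarrow$ \eqref{w1}, together with the weaker converse testing condition at radius $2R$, is actually established.
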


\begin{example}\label{example1a}
For $0 \leq \alpha<1$ and $1 \leq q<\infty$ the weights $\tilde{u}(x)=|x|^{-N-(1-\alpha)q}$ and $\tilde{v}(x)=|x|^{\alpha - 1}$   
satisfy \eqref{w2}. 
\end{example}

\begin{proof}
Let us first prove \eqref{w2} implies \eqref{w1}. Then by Minkowski inequality we have
\begin{align}
\left[ \int_{\R^{N}} \left(\int_{B(0,|x|/2)}g(z)dz \right)^{q}\widetilde{u}(x)dx \right]^{1/q} &=
  \left[ \int_{\R^{N}} \left(\int_{\R^{N}}g(z)\chi_{\left\{2|z|\leq |x|\right\}}(z,x)dz \right)^{q}  \widetilde{u}(x)dx \right]^{1/q} \nonumber \\ 
 & \leq \int_{\R^{N}} \left(\int_{\R^{N}} [g(z)]^{q}\chi_{\left\{2|z|\leq |x|\right\}}(z,x) \widetilde{u}(x) dx \right)^{1/q} dz  \nonumber \\
  & =\int_{\R^{N}} g(z)\widetilde{v}(z) \left(\int_{B^{c}(0,2|z|)} \tilde{u}(x) dx \right)^{1/q} \widetilde{v}^{\,\,-1}(z)dz \nonumber \\ 
  &\leq \tilde{A} \int_{\R^{N}} g(z)\widetilde{v}(z)dz, 
\end{align} 
since
$$ \left(\int_{B^{c}(0,2|z|)} \tilde{u}(x) dx \right)^{1/q}\widetilde{v}^{\,\,-1}(z) \leq
 \left(\int_{B^{c}(0,2|z|)} \tilde{u}(x) dx \right)^{1/q} \left(\sup_{y \in B(0,2|z|)} \widetilde{v}^{\,\,-1}(y) \right) \leq \tilde{A}. $$

Consider $\displaystyle{S(R):=\sup_{x \in B(0,R)}  \widetilde{v}^{-1}(x)}$. Then given $n \in \N$ we define
$$\widetilde{M_{n}}:= \left\{ x \in B(0,R) \,\, | \,\,  \widetilde{v}^{\,\,-1}(x)> S(R)-\frac{1}{n}\right\}. $$ 
Since $|\widetilde{M_{n}}|>0$ then there exist $M_{n}\subseteq \widetilde{M_{n}}$ with $0<|M_{n}|<\infty$. Define $\widetilde{f_{n}}(x)=\chi_{M_{n}}(x)$
thus from \eqref{w1} and $R>0$ we have
\begin{align*}
\left[ \int_{B^c(0,R)} |{M_{n}}|^{q} \widetilde{u}(x)dx \right]^{1/q} &=\left[ \int_{B^c(0,R)} \left(\int_{B(0,|x|)}\chi_{M_{n}}(z)dz \right)^{q}\widetilde{u}(x)dx \right]^{1/q} \\
& \leq \left[ \int_{\R^{N}} \left(\int_{B(0,|x|)}\chi_{M_{n}}(z)dz \right)^{q}\widetilde{u}(x)dx \right]^{1/q} \\
&\leq C \int_{M_{n}}\widetilde{v}(x)dx \\
& \leq C \left( S(R)-\frac{1}{n} \right)^{-1} |M_{n}|.
\end{align*}
Taking $n \rightarrow \infty$ we have
\begin{align*}
\left[ \int_{B^c(0,R)} \widetilde{u}(x)dx \right]^{1/q}S(R) \leq C 
\end{align*}
for all $R>0$ and then follows \eqref{w2}. 

The proof that \eqref{w3} is equivalent to \eqref{w4} follows analogously and it  will be omitted.  
\qed
\end{proof}

\begin{remark}
We emphasise that in the previous proof the weaker condition  
\begin{equation}\label{weakerw2}
 \left(\int_{B^{c}(0,2|z|)} \tilde{u}(x) dx \right)^{1/q}  \leq A\, \widetilde{v}(z) \quad \quad a.e. 
\end{equation}
is sufficient to prove \eqref{w1}.
\end{remark}


\begin{example} Let $0<\ell<N$, $\alpha+\beta > 0$ and  
$
\displaystyle{\frac{1}{q} :=1+\frac{\beta+\alpha-\ell}{N}}
$.
Consider  $u(x)=|x|^{-\beta q}$ and $v(x)=|x|^{\alpha}$. Then \eqref{w2} holds if and only if $\alpha<0$.
\end{example} 

Indeed it follows by 
\begin{align*}
\left( \int_{B^{c}(0,R)}\frac{u(x)}{|x|^{(N-\ell)q}}dx \right)^{1/q} &= |\mathbb{S}^{N-1}|^{1/q}\int_{R}^{\infty} r^{-\beta q-(N-\ell)q+N-1}dr
=C R^{\alpha},
\end{align*}
where $C$ is independent of $R$ and $-\beta q-(N-\ell+1)q+N<0$ that is equivalent to $\alpha<0$ and moreover 
$\displaystyle{\sup_{x \in B(0,R)}v^{-1}(x)=R^{-\alpha}.}$

\bigskip

\noindent \textbf{Proof of  of Theorem \ref{thm1.1}}.
Consider $K(x,y):=\gamma_{N,\ell} |x-y|^{-N+\ell}$ and $0<\ell<N$. 
Let $\psi \in C_{c}^{\infty}(B_{1/2})$ be a cut-off function such that $0\leq \psi \leq 1$,  $\psi \equiv 1$ on $B_{1/4}$ and write $K(x,y)=K_{1}(x,y)+K_{2}(x,y)$ with
$\displaystyle{K_{1}(x,y)=\psi\left(\frac{y}{|x|}\right)K(x,0)}.$
In order to obtain the estimate \eqref{ineq1.2} it is enough conclude that 
\begin{align*}
J_{i}\doteq &\left(\int_{\R^{N}}\left| \int_{\R^{N}} K_{i}(x,y)f(y)dy   \right|^{q} u(x) \, \,dx\right)^{1/q} 
           \lesssim \int_{\R^{N}} |f(x)|v(x)\,dx, \quad \quad i=1,2.
\end{align*}

Writing the definition of $K_{1}(x,y)$ and applying \eqref{w1} for $\widetilde{g}(x)=|f(x)|$, 
$\tilde{u}(x)=u(x)|x|^{-(N-\ell+1)q}$ and $\widetilde{v}(x)=v(x)$ we have
\begin{align*}
J_{1}&=
\left(\int_{\R^{N}}\left| \int_{\R^{N}} \psi\left(\frac{y}{|x|}\right)f(y)dy\right|^{q}{|K(x,0)|^{q}}u(x)\,dx\right)^{1/q}    \\
&=
\left(\int_{\R^{N}}\left| \int_{\R^{N}} \psi\left(\frac{y}{|x|}\right)f(y)dy\right|^{q} \frac{u(x)}{|x|^{(N-\ell)q}} dx\right)^{1/q}    \\
& \lesssim \|\psi\|_{L^\infty} \displaystyle{\left(\int_{\R^{N}}\left| \int_{B(0,{|x|/2})} f(y)dy\right|^{q}  \frac{u(x)}{|x|^{(N-\ell)q}}  \,dx\right)^{1/q}}   \\
  &\leq C  \int_{\R^{N}} f(x)v(x)dx
\end{align*}
since
\begin{align*}
\left( \int_{B^{c}(0,|x|/2)}\frac{u(y)}{|y|^{(N-\ell)q}}dy \right)^{1/q} v^{-1}(x)
\lesssim  
\left( \int_{\R^N} \frac{u(y)}{|y-x|^{(N-\ell)q}}  \,dy \right)^{1/q} v^{-1}(x) \leq \widetilde{C}, 
\end{align*}
where the constant is independent of $x$ from \eqref{hipotese} and then the weaker condition \eqref{weakerw2} is satisfied.
To estimate $J_{2}$ let us analyze the kernel $K_{2}(x,y)$. Clearly $K_{2}(x,y)=K(x,y)$ for $2|y|>|x|$ and  
if $|x|>4|y|$ then $K_{2}(x,y)=K(x,y)-K(x,0)$ that implies 
\begin{align*}
|K_{2}(x,y)|\leq |y|\sup_{z \in [0,y]}|\partial_{y}K(x,z)| \leq C |y| |x|^{\;\ell-N-1}.
\end{align*}
A similar estimate holds  in the region $|x|<4|y|<2|x|$ thanks to the identity  
\begin{equation*}
K_{2}(x,y)=\left[1-\psi \left(\frac{y}{|x|}\right)\right]K(x,y)+\psi\left(\frac{y}{|x|}\right)\left[ K(x,y)-K(x,0) \right].
\end{equation*}
Using once more Minkowski's inequality we have

\begin{align*}
J_{2} = \left(\int_{\R^{N}}\left| \int_{\R^n} K_{2}(x,y)f(y)dy   \right|^{q} u(x) \, \,dx\right)^{1/q}
&\leq \int_{\R^{N}} \left( \int_{\R^N} \frac{u(x)}{|x-y|^{(N-\ell)q}}  \,dx \right)^{1/q} |f(y)|dy. \\
& \leq \tilde{C} \int_{\R^N} v(y)|f(y)|dy,
\end{align*}
from \eqref{hipotese}.

Now we moving on to the converse. Since $\nu \in L^{1}_{loc}(\R^N)$ let 
$y \in \R^N$ such that $\nu$ is well defined (almost everywhere set). Let  $\varphi \in C_{c}^{\infty}(B(0,1))$ nonnegative with $\int_{\R^N} \varphi(x)dx =1$ and define $\varphi_{\epsilon}(x)=\epsilon^{-N}\varphi((x-y)/\epsilon)$ for $\epsilon>0$. Applying $\varphi_{\epsilon}$ in \eqref{ineq1.2} we have
\begin{equation}\nonumber
\left( \int_{\R^{N}} | I_\ell \varphi_{\epsilon}(x)|^{q}u(x)dx\right)^{1/q} \le C \int_{\R^{N}}v(y+\epsilon z)\varphi(z)dz.
\end{equation}
Since $\displaystyle{\int_{\R^{N}}v(y+\epsilon z)\varphi(z)dz \rightarrow v(y)}$ and
$I_{\ell}\varphi_{\epsilon}(x) \rightarrow |x-y|^{-(N-\ell)}$
almost everywhere taking $\epsilon \rightarrow 0^+$ follows 
\begin{align*}
\left( \int_{\R^N} \frac{u(x)}{|x-y|^{(N-\ell)q}}  \,dx \right)^{1/q} \leq C v(y), \quad \quad \forall \,\,y \in \R^N\,\,a.e.
\end{align*}
as we desired.
\qed

\section{Stein-Weiss inequality in $L^{1}$ norm for vector fields}\label{sec3}

This section is devoted to the proof of the first part of Theorem \ref{main1}, precisely the inequality \eqref{main5}. 
The first ingredient is the following
\begin{lemma}\label{mainlemma}
Let $L(D)$ be a cocanceling operator as Definition \ref{de1.1}.   
There exists $C>0$ 
such that for every $\varphi \in C^{m}(\R^{N}\backslash \left\{0\right\};F)$ that satisfies $|x|^{j}|D^{j}\varphi(x)| \in L^{1}_{loc}(\R^{N})$ for $j=0,....,m$ and for all $f \in C_{c}^{\infty}(\R^{N};F)$ such that $L(D)f=0$, we have that
\[
\left| \int_{\R^N} \varphi(y)\cdot f(y)\,dy\right|\le 
C\sum_{j=1}^m \int_{\R^N}|f(y)|\,|y|^j\, |D^j\varphi (y)|\,dy \tag{$\dagger$}.
\]
\end{lemma}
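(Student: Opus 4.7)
The plan is to reduce Lemma \ref{mainlemma} to \cite[Lemma 2.2]{BVS}, which establishes the analogous inequality under the stronger regularity assumption that $\varphi\in C^{m}(\R^{N};F)$ globally, and to accommodate the singularity of $\varphi$ at the origin by a cutoff-and-limit argument.

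First I would choose a family of smooth cutoffs $\chi_{\epsilon}\in C^{\infty}(\R^{N})$ satisfying $\chi_{\epsilon}\equiv 0$ on $B(0,\epsilon)$, $\chi_{\epsilon}\equiv 1$ on $\R^{N}\setminus B(0,2\epsilon)$, and $|D^{k}\chi_{\epsilon}|\leq C_{k}\,\epsilon^{-k}$ for $0\leq k\leq m$. Since $\chi_{\epsilon}\varphi\in C^{m}(\R^{N};F)$, applying \cite[Lemma 2.2]{BVS} to this product gives
\[
\left|\int_{\R^{N}}\chi_{\epsilon}(y)\varphi(y)\cdot f(y)\,dy\right|\leq C\sum_{j=1}^{m}\int_{\R^{N}}|y|^{j}\,|D^{j}(\chi_{\epsilon}\varphi)(y)|\,|f(y)|\,dy,
\]
with $C$ independent of $\epsilon$ since it depends only on $L(D)$ and the dimension $N$.

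Next I would expand $D^{j}(\chi_{\epsilon}\varphi)$ by the Leibniz rule and analyze the pieces separately. The ``main'' terms $\chi_{\epsilon}\,|D^{j}\varphi|$, in which no derivative falls on the cutoff, converge pointwise to $|D^{j}\varphi|$ on $\R^{N}\setminus\{0\}$; using the hypothesis $|y|^{j}|D^{j}\varphi|\in L^{1}_{loc}(\R^{N})$ together with the compact support of $f$, dominated convergence yields that their contribution tends to $\int_{\R^{N}}|y|^{j}|D^{j}\varphi||f|\,dy$. The ``commutator'' pieces $|D^{k}\chi_{\epsilon}||D^{l}\varphi|$ with $k\geq 1$ and $k+l=j$ are supported in the thin annulus $A_{\epsilon}=B(0,2\epsilon)\setminus B(0,\epsilon)$, and on that set
\[
|y|^{j}|D^{k}\chi_{\epsilon}||D^{l}\varphi|\leq C\,\epsilon^{\,j-k}|D^{l}\varphi|= C\,\epsilon^{\,l}|D^{l}\varphi|\leq C\,|y|^{l}|D^{l}\varphi|.
\]
Since $|A_{\epsilon}|\to 0$ and $|y|^{l}|D^{l}\varphi|\in L^{1}_{loc}$, every such commutator contribution vanishes as $\epsilon\to 0^{+}$. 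Simultaneously the left-hand side converges, $\int \chi_{\epsilon}\varphi\cdot f\,dy\to\int\varphi\cdot f\,dy$, by dominated convergence based on the $j=0$ case $\varphi\in L^{1}_{loc}$ and $f\in C_{c}^{\infty}$. Passing $\epsilon\to 0^{+}$ in the BVS inequality yields the claim.

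The main obstacle I anticipate is the commutator piece with $l=0$, namely $|y|^{j}|D^{j}\chi_{\epsilon}||\varphi|$, which on $A_{\epsilon}$ is only controlled by $C|\varphi|$; its integral against $|f|$ tends to zero solely because $|A_{\epsilon}|\to 0$ combined with $\varphi\,|f|\in L^{1}_{loc}$. This is precisely why the $j=0$ case of the hypothesis, i.e.\ $\varphi\in L^{1}_{loc}$, must be included in the lemma's assumptions. A second small point to verify is that the constant in \cite[Lemma 2.2]{BVS} is independent of the cutoff parameter, which is immediate since it depends only on $L(D)$ and $N$; and should \cite[Lemma 2.2]{BVS} itself be stated only with the top-order term $|D^{m}\varphi|$, the same cutoff argument still delivers the $\sum_{j=1}^{m}$-form by collecting the commutator contributions in the limit.
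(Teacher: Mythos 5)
Your proof is correct, and it takes a genuinely different route from the paper's. The paper does not argue by reduction: it reproduces the algebraic construction of \cite[Lemma 2.2]{BVS} from scratch. Using \cite[Lemma 2.5]{VS} it produces maps $k_\alpha\in\mathcal L(V,F)$ with $\sum_{|\alpha|=m}k_\alpha\circ b_\alpha=\mathrm{Id}_F$, sets $P(x)=\sum_{|\beta|=m}\frac{x^\beta}{\beta!}k_\beta^\ast$ so that $L^\ast(D)P\equiv\mathrm{Id}_F$, writes $\varphi=L^\ast(D)(P)\,\varphi$, integrates by parts against $L(D)f=0$ to replace $\varphi$ by the commutator $T_\varphi(x)=L^\ast(D)(P(x))\varphi(x)-L^\ast(D)(P(x)\varphi(x))$, and then bounds $|T_\varphi(x)|\lesssim\sum_{j=1}^m|x|^j|D^j\varphi(x)|$ via the Leibniz rule and homogeneity of the entries of $P$. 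Your approach instead treats \cite[Lemma 2.2]{BVS} (for $\varphi\in C^m(\R^N;F)$) as a black box and removes the regularity at the origin by a cutoff $\chi_\epsilon$ and a limiting argument. Both are valid, and each buys something: the paper's direct proof is self-contained and exhibits exactly why the sum on the right starts at $j=1$, but as written it is carried out for $\varphi\in C_c^\infty(\R^N;F)$ and leaves tacit the passage to the singular $\varphi\in C^m(\R^N\setminus\{0\};F)$ of the actual statement (one must justify the integration by parts $\int L^\ast(D)(P\varphi)\cdot f\,dx=\int P\varphi\cdot L(D)f\,dx=0$ when $P\varphi$ is only $C^m$ off the origin with weighted $L^1_{loc}$ control). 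Your cutoff-and-limit argument supplies precisely that missing regularization and makes transparent why the hypothesis $|x|^j|D^j\varphi|\in L^1_{loc}$ is imposed for all $j=0,\dots,m$, including $j=0$ (needed both to pass to the limit on the left side and to kill the commutator term $|y|^j|D^j\chi_\epsilon|\,|\varphi|$). Two small remarks: first, for the ``main'' pieces you do not even need convergence, since $\chi_\epsilon\le 1$ already gives the required upper bound for every $\epsilon$, so only the commutator pieces require a limiting argument; second, your closing caveat about BVS possibly containing only the top-order term is moot here, since the $k\ge1$ commutator contributions vanish in the limit and cannot generate lower-order terms on the right — if BVS had only $|D^m\varphi|$ you would simply obtain a stronger conclusion.
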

The proof follows the same steps of \cite[Lemma 2.2]{BVS} and will be present in the end of this section for a sake of completeness.  


The second ingredient is a slight extension of Fundamental Lemma in \cite[Lemma 2.1]{HP}.
\begin{lemma}\label{main}
Assume $N \geq 2$, 
$0 < \ell < N$ and $K(x, y) \in L^{1}_{loc}(\R^N \times \R^N, \mathcal{L}(F;V))$ 
satisfying
\begin{equation} \label{ka} 
 |K(x, y)| \leq C \; |x - y|^{\ell-N} , \quad x \neq y 
\end{equation}
and 
\be |K(x,y)-K(x,0)| \leq C \; \frac{|y|} {|x|^{N+1-\ell}}, \label{kb} \ee
with $2|y|\leq |x|$. Suppose $0 \leq \alpha<1$, $\beta<N/q$, $\alpha+\beta>0$ and $\displaystyle{{1}/{q}=1+ ({\alpha+\beta-\ell})/{N}}$. Then if $L(D)$ is cocanceling  there exists $\widetilde{C}>0$ such that 
\begin{equation}\label{mainineq}
 \left( \int_{\R^N} \left| \int_{\R^N} K(x,y)f(y) \; dy \right|^q |x|^{-\beta q} \;dx  \right)^{1/q} 
\leq \widetilde{C} \int_{\R^{N}} |x|^{\alpha}|f(x)|dx, 
\end{equation}
for all $f \in C^\infty_c(\R^N;F)$ such that $L(D)f=0$ in the sense of distributions. 
\end{lemma}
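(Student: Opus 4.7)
The plan is to mirror the splitting used in the proof of Theorem~\ref{thm1.1}: write $K(x,y) = K_1(x,y) + K_2(x,y)$ with $K_1(x,y) := \psi(y/|x|)\,K(x,0)$, where $\psi$ is the cutoff of Section~\ref{two} ($\psi \equiv 1$ on $B_{1/4}$, supported in $B_{1/2}$), and denote by $T_i f(x) := \int K_i(x,y) f(y)\,dy$ the associated operator and by $J_i$ its weighted $L^q$-norm. The key difference from Theorem~\ref{thm1.1} is that the weight $v(y) = |y|^\alpha$ with $\alpha \geq 0$ fails the testing condition \eqref{hipotese}; the cocanceling hypothesis must do the work in the estimate of $J_1$, whereas $J_2$ will succumb to direct pointwise kernel bounds.

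For $J_1$, exploit the factorization $T_1 f(x) = K(x,0) \int \psi(y/|x|) f(y)\,dy$. For a fixed unit $\eta \in F$, Lemma~\ref{mainlemma} applied to $\varphi(y) = \psi(y/|x|)\,\eta$, followed by taking the supremum over $\eta$, yields
\[
\Bigl|\int \psi(y/|x|) f(y)\,dy\Bigr| \leq C \sum_{j=1}^m |x|^{-j} \int |f(y)|\,|y|^j\,|D^j\psi(y/|x|)|\,dy.
\]
Since $\psi$ is constant near the origin, $D^j\psi$ is supported on an annulus $\{|z| \in [1/4, 1/2]\}$ for every $j \geq 1$, so the integrand lives on $|x|/4 \leq |y| \leq |x|/2$. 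On this set $|y|^{j-\alpha} \lesssim |x|^{j-\alpha}$ (using $j \geq 1 > \alpha$); factoring $|y|^j = |y|^\alpha |y|^{j-\alpha}$ and combining with $|K(x,0)| \lesssim |x|^{\ell - N}$ gives
\[
|T_1 f(x)| \lesssim |x|^{\ell - N - \alpha} \int_{|x|/4 \leq |y| \leq |x|/2} |f(y)|\,|y|^\alpha\,dy.
\]
The scaling relation $1/q = 1 + (\alpha+\beta-\ell)/N$ rewrites as $(\ell - N - \alpha - \beta)q = -N$, so the weighted $L^q$ integrand becomes $|x|^{-N}(\cdots)^q$. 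Applying Minkowski's inequality and observing that $\int_{2|y| \leq |x| \leq 4|y|} |x|^{-N}\,dx$ is a finite dimensional constant independent of $y$ completes $J_1 \lesssim \int |f(y)|\,|y|^\alpha\,dy$.

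For $J_2$ the cocanceling hypothesis plays no role; the pointwise bounds $|K_2(x,y)| \lesssim |x-y|^{\ell-N}$ on $\{2|y| > |x|\}$ and $|K_2(x,y)| \lesssim |y|\,|x|^{\ell-N-1}$ on $\{|x| > 4|y|\}$ (with an analogous estimate in the transition region $\{2|y| \leq |x| \leq 4|y|\}$) recalled in the proof of Theorem~\ref{thm1.1} are available. By Minkowski it suffices to verify
\[
\Bigl(\int |K_2(x,y)|^q |x|^{-\beta q}\,dx\Bigr)^{1/q} \lesssim |y|^\alpha,
\]
which I check by splitting the domain into $\{|x| \leq |y|/2\}$, $\{|y|/2 \leq |x| \leq 4|y|\}$, and $\{|x| > 4|y|\}$. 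Polar coordinates together with the scaling identity $(\ell - N - \beta)q + N = \alpha q$ yield $|y|^{\alpha q}$ in each region; the hypotheses $\beta < N/q$, $\alpha + \beta > 0$, and $\alpha < 1$ are exactly what guarantees convergence at the origin, at the singularity $x=y$, and at infinity, respectively.

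The main obstacle is the bound on $J_1$: the naive estimate $|T_1 f(x)| \lesssim |x|^{\ell - N}\,\|f\|_{L^1}$ ignores any cancellation in $\int \psi(y/|x|) f(y)\,dy$ and produces a divergent weighted $L^q$ norm, since $|x|^{(\ell-N-\beta)q}$ is not integrable at infinity. Lemma~\ref{mainlemma} supplies precisely the needed decay, concentrating the effective support of this integral on the annulus $|y| \sim |x|$, and the assumption $\alpha < 1$ is what converts the derivative moments $|y|^j$ into the admissible weight $|y|^\alpha$.
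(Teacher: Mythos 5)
Your proof is correct and hinges on the same decomposition $K = K_1 + K_2$ with $K_1(x,y) = \psi(y/|x|)K(x,0)$, the same use of Lemma~\ref{mainlemma} for $J_1$, and the same Minkowski-plus-pointwise-kernel-bounds treatment of $J_2$. Where you genuinely depart from the paper is in the estimate for $J_1$. The paper retains the integral over the full ball $B_{|x|/2}$ after applying Lemma~\ref{mainlemma}, arriving at a Hardy-type expression $\bigl(\int |x|^{-N-(1-\alpha)q}(\int_{B_{|x|/2}} |y||f(y)|\,dy)^q\,dx\bigr)^{1/q}$ which it then disposes of by invoking the weighted Hardy inequality Lemma~\ref{weight1} with the weights of Example~\ref{example1a}. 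You instead observe that for $j \geq 1$ the derivatives $D^j\psi$ are supported on the annulus $\{1/4 \leq |z| \leq 1/2\}$, so the inner $y$-integral actually lives on $|x|/4 \leq |y| \leq |x|/2$; combined with the scaling identity $(\ell - N - \alpha - \beta)q = -N$ this reduces $J_1$ to $\bigl(\int |x|^{-N}(\int_{|x|/4 \leq |y| \leq |x|/2}|y|^\alpha|f(y)|\,dy)^q\,dx\bigr)^{1/q}$, and one application of Minkowski together with the elementary fact that $\int_{2|y|\le|x|\le 4|y|}|x|^{-N}\,dx$ is a dimensional constant finishes the job. This bypasses Lemma~\ref{weight1} entirely for this step, making the argument more self-contained; the tradeoff is that the paper's route, by going through Lemma~\ref{weight1} and the condition \eqref{w2}, is more readily adapted to the general two-weight versions (compare the corollary after Lemma~\ref{main} and condition \eqref{pesopeso}).
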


The proof of inequality \eqref{main5} follows directly from Lemma \ref{main} applying $K(x,y):=\gamma_{N,\ell} |x-y|^{-N+\ell}$  at inequality \eqref{mainineq}. Clearly \eqref{ka} and \eqref{kb} are satisfied.

\begin{proof}
Let $\psi \in C_{c}^{\infty}(B_{1/2})$ be a cut-off function such that $0\leq \psi \leq 1$,  $\psi \equiv 1$ on $B_{1/4}$ and write $K(x,y)=K_{1}(x,y)+K_{2}(x,y)$ with
$\displaystyle{K_{1}(x,y)=\psi\left(\frac{y}{|x|}\right)K(x,0)}.$
In order to obtain the estimate it is enough conclude that 
\begin{align*}
J_{i}\doteq &\left(\int_{\R^{N}}\left| \int_{\R^{N}} K_{i}(x,y)f(y)dy   \right|^{q} |x|^{-\beta q} \, \,dx\right)^{1/q} 
           \lesssim \int_{\R^{N}} |x|^{\alpha}|f(x)|\,dx, \quad \quad i=1,2,
\end{align*}
for every $u \in C_{c}^{\infty}(\R^{N};F)$. Thus,
\begin{align*}
J_{1}&=
\left(\int_{\R^{N}}\left| \int_{\R^{N}} \psi\left(\frac{y}{|x|}\right)f(y)dy\right|^{q}\frac{|K(x,0)|^{q}}{|x|^{\beta q}}\,dx\right)^{1/q}    \\
&=
\left(\int_{\R^{N}}\left| \int_{\R^{N}} \psi\left(\frac{y}{|x|}\right)f(y)dy\right|^{q}\frac{|K(x,0)|^{q}}{|x|^{N-(N-\ell)q-\alpha q }}dx\right)^{1/q}    \\
& \lesssim \displaystyle{\left(\int_{\R^{N}}\left| \int_{B_{|x|/2}} \frac{|y|}{|x|}|f(y)|dy\right|^{q}\frac{1}{|x|^{N-\alpha q}}\,dx\right)^{1/q}}   \\
&=\displaystyle{\left(\int_{\R^{N}}\left| \int_{B_{|x|/2}}  |y||f(y)|dy\right|^{q}\frac{1}{|x|^{N+(1-\alpha)q}}\,dx\right)^{1/q}}.
\end{align*}
The third inequality comes from \eqref{ka} and from a consequence of Lemma \ref{mainlemma}.
The estimate $(\dagger)$ is used for $\varphi(y)\doteq\psi(y/|x|) \eta$ where, for fixed $x$, $\eta$ is an unit vector in $F$ chosen so that %
\[
\left|\int_{\R^{N}} \psi\left(\frac{y}{|x|}\right)\eta\cdot f(y)dy \right|=
\left|\int_{\R^{N}} \psi\left(\frac{y}{|x|}\right)f(y)dy\right|.
\]
From the first part of Lemma \ref{weight1} for $\tilde{f}(y)=|y||f(y)|$, $\tilde{u}(x)=|x|^{-N-(1-\alpha)q}$ and $\tilde{v}(x)=|x|^{\alpha - 1}$ (see Example \ref{example1a}) we have   
\begin{align*}
J_{1} \lesssim \displaystyle{\left(\int_{\R^{N}}\left| \int_{B_{|x|/2}}  |y||f(y)|dy\right|^{q} |x|^{-N-(1-\alpha)q}  \,dx\right)^{1/q}}
&\lesssim   \int_{\R^{n}} |y| |f(y)| |y|^{\alpha - 1}dy  \\
&= \int_{\R^{n}} |f(y)||y|^{\alpha}dy.
\end{align*}


To estimate $J_{2}$ let us analyze the kernel $K_{2}(x,y)$. If $2|y|>|x|$ then from  \eqref{ka} we have $|K_{2}(x,y)|=|K(x,y)| \leq C |x-y|^{\;\ell-N}$ for $2|y|>|x|$. Otherwise if $|x|>4|y|$ then $K_{2}(x,y)=K(x,y)-K(x,0)$ that implies from \eqref{kb}
\begin{align*}
|K_{2}(x,y)|\leq |y|\sup_{z \in [0,y]}|\partial_{y}K(x,z)| \leq C |y| |x|^{\;\ell-N-1}.
\end{align*}
A similar estimate holds  in the region $|x|<4|y|<2|x|$ thanks to the identity  
\begin{equation*}
K_{2}(x,y)=\left[1-\psi \left(\frac{y}{|x|}\right)\right]K(x,y)+\psi\left(\frac{y}{|x|}\right)\left[ K(x,y)-K(x,0) \right].
\end{equation*}
Using Minkowski's inequality we have
\begin{align*}
J_{2} \leq \int_{\R^{N}} \left( \int_{\R^{N}}{|K_{2}(x,y)|^{q}}{|x|^{-\beta q}}  \,dx \right)^{1/q} |f(y)|dy 
  \lesssim \int_{\R^{N}}|y|^{\alpha}|f(y)|dy,
\end{align*}
as we wished since 
\begin{align*}
\int_{\R^{N}}\frac{|K_{2}(x,y)|^{q}}{|x|^{\beta q}}\,dx & \leq 
\int_{B_{2|y|}}\frac{|x|^{\alpha q}}{|x-y|^{(N-\ell)q}|x|^{N-(N-\ell)q}}\,dx
           + \int_{\R^{N} \backslash B_{2|y|}}\frac{|y|^{q}}{|x|^{N+q-\alpha q}}\,dx
& \lesssim |y|^{\alpha q}.
\end{align*}
\end{proof}

\begin{remark} The assumption \eqref{kb} may be replaced by the stronger assumption
\be |\partial_y K(x, y)| \leq C \; |x - y|^{\ell-N-1} , \quad x \neq y. \label{kb2} \ee
\end{remark}
\begin{corollary} Let $K_{1}(x,y)$ as in the proof of Lemma \ref{main} and $u(x),v(x)$ nonnegative functions in $L^{1}_{loc}(\R^N)$ . If
there exist an universal constant $C>0$ such that
\begin{equation}\label{pesopeso}
\left(\int_{B^{c}(0,2|y|)} \frac{u(x)}{|x|^{(N-\ell+1)q}} dx \right)^{1/q} \leq C v(y)|y|^{-1}  \quad \quad a.e. \,\, \quad y \in \R^N
\end{equation}
then
$$\left(\int_{\R^{N}}\left| \int_{\R^{N}} K_{1}(x,y)f(y)dy\right|^{q}u(x)\,dx\right)^{1/q}   \lesssim  \int_{\R^N} v(y)|f(y)|dy.$$
\end{corollary}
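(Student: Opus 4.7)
The plan is to reproduce the estimate for $J_1$ in the proof of Lemma \ref{main} essentially verbatim, replacing the power weights by the general weights $u$ and $v$, and invoking Lemma \ref{weight1} (in its weaker sufficient form \eqref{weakerw2}) at the end. I am implicitly assuming, as the corollary is a consequence of that proof, that $f\in C_c^\infty(\R^N;F)$ satisfies $L(D)f=0$, so that Lemma \ref{mainlemma} is available.

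First, by \eqref{ka} we have $|K(x,0)|\lesssim |x|^{\ell-N}$, hence
\[
\left|\int_{\R^N} K_1(x,y)f(y)\,dy\right| \;\lesssim\; |x|^{\ell-N}\left|\int_{\R^N} \psi(y/|x|)\,f(y)\,dy\right|.
\]
For fixed $x$, choose a unit vector $\eta\in F$ so that the inner integral equals $|\int \psi(y/|x|)\,\eta\cdot f(y)\,dy|$ and apply Lemma \ref{mainlemma} to $\varphi(y)=\psi(y/|x|)\eta$. Since $|D^j\varphi(y)|\lesssim |x|^{-j}\chi_{B_{|x|/2}}(y)$ and $|y|\le |x|/2$ on the support, each term $|y|^j|D^j\varphi(y)|$ is bounded by $C(|y|/|x|)\chi_{B_{|x|/2}}(y)$, and therefore
\[
\left|\int \psi(y/|x|)f(y)\,dy\right| \;\lesssim\; \int_{B_{|x|/2}} \frac{|y|}{|x|}\,|f(y)|\,dy.
\]
Combining these two bounds yields
\[
\left|\int_{\R^N} K_1(x,y)f(y)\,dy\right| \;\lesssim\; |x|^{\ell-N-1}\int_{B_{|x|/2}}|y|\,|f(y)|\,dy.
\]

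Now raise to the $q$-th power, integrate against $u(x)$, and apply the inequality \eqref{w1} of Lemma \ref{weight1} with the choices
\[
g(y)=|y||f(y)|,\qquad \widetilde u(x)=\frac{u(x)}{|x|^{(N-\ell+1)q}},\qquad \widetilde v(y)=\frac{v(y)}{|y|}.
\]
The required sufficient condition \eqref{weakerw2} reads
\[
\left(\int_{B^c(0,2|y|)}\frac{u(x)}{|x|^{(N-\ell+1)q}}\,dx\right)^{1/q}\le A\,\frac{v(y)}{|y|}\qquad\text{a.e.,}
\]
which is exactly the hypothesis \eqref{pesopeso}. Hence
\[
\left(\int_{\R^N}\left|\int_{\R^N} K_1(x,y)f(y)\,dy\right|^q u(x)\,dx\right)^{1/q}\!\lesssim\!\int_{\R^N} |y||f(y)|\cdot\frac{v(y)}{|y|}\,dy=\int_{\R^N}v(y)|f(y)|\,dy,
\]
which is the desired estimate.

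There is no genuine obstacle beyond bookkeeping: the cocanceling hypothesis on $L(D)$ is what allows Lemma \ref{mainlemma} to convert the near-singular piece $\psi(y/|x|)$ into a gain of $|y|/|x|$, and the hypothesis \eqref{pesopeso} is tailored so that the ensuing two-weight Hardy-type inequality \eqref{w1} applies with constant $\widetilde v=v/|\cdot|$. The only point that needs a small amount of care is the observation that \eqref{pesopeso} is precisely the weaker sufficient condition \eqref{weakerw2} for the pair $(\widetilde u,\widetilde v)$, so that one may bypass the global supremum form \eqref{w2}.
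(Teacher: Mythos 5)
Your proof is correct and follows essentially the same route as the paper: bound $|K(x,0)|$ by \eqref{ka}, apply Lemma \ref{mainlemma} to $\varphi(y)=\psi(y/|x|)\eta$ to convert the cut-off integral into $\int_{B_{|x|/2}}\frac{|y|}{|x|}|f(y)|\,dy$, and then invoke the two-weight Hardy inequality \eqref{w1} with $\tilde g(y)=|y||f(y)|$, $\tilde u(x)=u(x)|x|^{-(N-\ell+1)q}$, $\tilde v(y)=v(y)/|y|$, noting that \eqref{pesopeso} is exactly the weaker sufficient condition \eqref{weakerw2}. You have in fact supplied a detail the paper leaves implicit, namely that the corollary tacitly assumes $f\in C_c^\infty(\R^N;F)$ with $L(D)f=0$ so that Lemma \ref{mainlemma} is applicable.
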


\begin{proof}
Using the definition of $K_{1}(x,y)$ and the Lemma \ref{mainlemma}  we may write
\begin{align*}
 \left(\int_{\R^{N}}\left| \int_{\R^{N}} K_{1}(x,y)f(y)dy\right|^{q}u(x)\,dx\right)^{1/q} &
=
\left(\int_{\R^{N}}\left| \int_{\R^{N}} \psi\left(\frac{y}{|x|}\right)f(y)dy\right|^{q}{|K(x,0)|^{q}}u(x)\,dx\right)^{1/q}   \\
&\lesssim \displaystyle{\left(\int_{\R^{N}}\left| \int_{B_{|x|/2}}  |y||f(y)|dy\right|^{q} \frac{u(x)}{|x|^{(N-\ell+1)q}}    \,dx\right)^{1/q}}.
\end{align*}
From \eqref{w1} for $\tilde{g}(y)=|y||f(y)|$, $\tilde{u}(x)=u(x)|x|^{-(N-\ell+1)q}$ and $\tilde{v}(x)=|x|^{-1}v(x)$ assuming \eqref{pesopeso} we have
\begin{align*}
\displaystyle{\left(\int_{\R^{N}}\left| \int_{B_{|x|/2}}  |y||f(y)|dy\right|^{q} \frac{u(x)}{|x|^{(N-\ell+1)q}}    \,dx\right)^{1/q}}
\lesssim   \int_{\R^{N}} |y| |f(y)| \tilde{v}(y)dy 
= \int_{\R^{N}}  v(y)|f(y)|dy.
\end{align*}
\qed
\end{proof}

Next we present an important class of examples satisfying \eqref{pesopeso} as a limit case of the bump condition \eqref{bump-condition}.
\begin{example}
Consider $0<\ell<N$ and $q \geq 1$. Let $u \in L^{p}_{loc}(\R^{N})$ a nonnegative function for some $p \geq1$  and assume that for every ball $B:=B(x_{0},r)$ 
\begin{equation}\label{u3}
|B|^{ \left( \frac{1}{q}+\frac{\ell}{N}-1 \right)}\left(\frac{1}{|B|} \int_{B} u(x)^{p}dx \right)^{1/pq}\leq C
\end{equation}
where $C>0$ is independent of $u$ and $B$. Then 
$$ \left( \int_{\R^{N}\backslash B(0,2|y|)} \frac{u(x)}{|x|^{(N-\ell+1)q}}\,dx \right)^{1/q}  \lesssim |y|^{-1},\, \quad \quad a.e. \,\, \quad y \in \R^N.$$
\end{example}

Clearly if $u$ satisfies \eqref{u3} then $u \in L^{1}_{loc}(\R^N)$ and 
\begin{align*}
\left( \int_{B} u(x)dx \right)^{1/q} & \leq  \left(\int_{B} u(x)^{p}dx \right)^{1/pq} |B|^{1/q-1/pq}
\lesssim   |B|^{-\frac{\ell}{N}+1}
\end{align*}
and then
\begin{align*}
 \int_{\R^{N}\backslash B(0,2|y|)} \frac{u(x)}{|x|^{(N-\ell+1)q}}\,dx &  \leq \sum_{k=1}^{\infty} \int_{2^{k}|y|\leq |x| \leq 2^{k+1}|y|} \frac{u(x)}{|x|^{(N-\ell+1)q}}\,dx \\ 
 & \leq  \sum_{k=1}^{\infty} \frac{1}{(2^{k}|y|)^{(N-\ell+1)q}}\int_{B(0,2^{k+1}|y|)} u(x)\,dx \\
  & \lesssim  \sum_{k=1}^{\infty} \frac{1}{(2^{k}|y|)^{(N-\ell+1)q}}   (2^{k+1}|y|)^{-{\ell q}+Nq}  \\
    & \lesssim  \sum_{k=1}^{\infty} (2^{k}|y|)^{-(N-\ell+1)q-{\ell q}+Nq}  \\
     & \lesssim  |y|^{-q}.  \\
\end{align*}

\subsection{Proof of Lemma \ref{mainlemma}} 
By simplicity we identify $F$ and $V$ with standard Euclidean spaces endowed with the usual inner product that will be denoted by a dot, precisely we will write $f_1\cdot f_2$ for $f_1,f_2 \in F$. If $k \in \mathcal{L}(F,V)$ is a linear transformation from F to V, $k^* \in \mathcal{L}(V,F)$ will denote the adjoint of $k$ with respect to the inner product. We will assume throughout that $L(D)$ is cocanceling operator of order $m$ from finite dimensional complex vector space $F$ to finite dimensional complex vector space $V$.

Writing 
$\displaystyle{L(D)=\sum_{|\alpha|= m}b_{\alpha}\partial^{\alpha}}$, it follows from  \cite[Lemma 2.5]{VS} that there exist functions $k_{\alpha} \in \mathcal{L}(V,F))$ such that 
\begin{equation}\label{ident1}
\sum_{|\alpha|=m}k_{\alpha} \circ b_{\alpha}=Id_{F}.
\end{equation} 
 Let $P: \R^{N} \rightarrow \mathcal{L}(F)$ be given by 
 $\displaystyle{P(x)=\sum_{|\beta|=m}\frac{x^{\beta}}{\beta!}k_{\beta}^{\ast}}$
and let $\alpha \in \Z^{N}_{+}$ satisfy  $|\alpha|=k$.
Hence, since  $\partial^{\alpha}(x^{\beta})=0$ for $|\beta|=k$ when $\alpha \neq \beta$, we have
$\partial^{\alpha}  P(x)= k_{\alpha}^{\ast}$.
Thus, thanks to  \eqref{ident1},  the transpose $L^{*}(D)$ of $L(D)$ satisfies
 \[
L^{*}(D)(P(x))=\sum_{|\alpha|=m}b^{\ast}_{\alpha}\partial^{\alpha}(P(x))=Id_{F}.
\]
Then, given $f \in C_{c}^{\infty}(\R^{N};F)$ such that $L(D)f=0$ and $\varphi \in C_{c}^{\infty}(\R^{N};F)$ we may write
$\varphi(x)=L^{*}(D)(P(x))\varphi(x)$ and 
\begin{equation}
\int_{\R^{N}}\varphi(x)\cdot f(x)dx = \int_{\R^{N}} L^{*}(D)(P(x))\,\varphi(x) \cdot f(x)dx, \\
\end{equation}
Writing  $T_{\varphi}(x)=L^{*}(D)(P(x))\,\varphi(x) - L^{*}(D)(P(x)\varphi(x))$ we have
\[
\int_{\R^{N}}\varphi(x)\cdot f(x)dx =\int_{\R^{N}}T_{\varphi}(x)  \cdot f(x)dx
\]
since $L(D)f=0$. 
By Leibniz rule we have 
  \begin{equation}\nonumber
T_{\varphi}(x)=-  \sum_{|\alpha|=m}b_{\alpha}^{*} \sum_{0<\gamma\le\alpha} \binom{\alpha}{\gamma}\partial^{\alpha-\gamma}P(x)\partial^{\gamma}\varphi(x)
\end{equation}  
where
\[
\partial^{\alpha-\gamma}P(x)=\sum_{|\beta|=m} \frac{k^{\ast}_{\beta}}{\beta !} \partial^{\alpha-\gamma}[x^{\beta}] 
\]
Now
 \begin{equation*}
 \partial^\eta(x^{\beta})= 
 \begin{cases}
\displaystyle(\beta!/(\beta-\eta)!)x^{\beta-\eta}& \text{if $\eta \leq \beta$},\\
0&\text{otherwise}.
\end{cases}
\end{equation*}
Therefore
$$|\partial^{\alpha-\gamma}P(x)|\lesssim \sum_{|\beta|=m}\sum_{\substack{\eta \leq \alpha-\gamma\\ \eta \leq \beta}} |x|^{|\beta-\eta|} \lesssim |x|^{|\gamma|},$$
and 
$|\beta-\eta|=|\beta|-|\eta| \geq k-|\alpha-\gamma|\geq k -k +|\gamma|=|\gamma|.$
Combining the previous estimates we conclude that
\begin{equation}
|T_{\varphi}(x)|\leq C \sum_{j=1}^{m}|x|^{j}|D^{j}\varphi(x)|.
\end{equation}  
Then 
$$
\left| \int_{\R^{N}}\varphi(x)\cdot f(x)dx \right| \leq \int_{\R^{N}}|T_{\varphi}(x)| |f(x)|dx \leq 
 C \sum_{j=1}^{m} \int_{\R^{N}} |f(x)| |x|^{j}|D^{j}\varphi(x)|dx.
$$
\qed

\section{The cocancelation condition is necessary for $\alpha=0$}\label{sec4}



The proof of the necessity for $\alpha=0$ follows the lines of \cite[Section 3]{BVS}. Consider 
\[
f \in \bigcap_{\xi \in \R^{N}\backslash \left\{ 0 \right\}}Ker \,L(\xi) \subset F.
\] 
Let $\psi \in S(\R^{N})$ satisfying $\hat{\psi}(\xi)=1$ in $B(0,1)$, so in particular  
$\int_{\R^N} \psi(x)dx =1$, and set
\[
p_{\lambda}(x):=\lambda^{N}\psi( \lambda x)-\frac{1}{\lambda^{N}}\psi\left(\frac{x}{\lambda} \right), \quad \lambda \geq 1.
\]
Clearly $\|p_{\lambda}\|_{L^{1}} \leq 2 \|\psi \|_{L^{1}}$ and 
$\widehat{p}_{\lambda}(\xi)=\widehat\psi(\xi/\lambda)-\widehat\psi(\lambda \xi)=0$ on the ball $|\xi|<1/\lambda$ for each $\lambda\ge1$.
Setting
\[
\widehat{u}_{\lambda}(\xi):=\widehat{p_{\lambda}}(\xi)f
\]
we see that
$u_{\lambda} \in S(\R^{N};F)$ because $\widehat{u}_{\lambda}(\xi)$
vanishes on a neighborhood of the origin. 
By a density argument, we may apply \eqref{main5} to $u_{\lambda}$ to get

\begin{equation}\label{main4a}
\left(\int_{\R^{N}}{| I_{\ell}u_{\lambda}(x)|^{q}}{|x|^{-\beta q}}\,dx\right)^{1/q}\, \,dx \leq {C \int_{\R^{N}}|p_{\lambda}(x)f|\,dx}
\end{equation} 
for some $C>0$. {Moreover the right hand side of previous inequality is bounded by $\tilde{C}\|\psi(x)\|_{L^1}$ with constant independent of $\lambda\ge1$.} Note that 
\begin{equation}\nonumber
I_{\ell}u_{\lambda}(x)= \left( K \ast u_{\lambda}\right)(x) = \left( K \ast p_{\lambda}\right)(x)f  
\end{equation}
where $K(x):=\gamma_{N,\ell} |x|^{-N+\ell}$, so \eqref{main4a} may be written as
\begin{equation*}
\left(\int_{\R^{N}}| K \ast p_{\lambda}(x) f|^{q}|x|^{-\beta q}\,dx\right)^{1/q}\lesssim 1. 
\end{equation*}
\noindent \textbf{Claim.}  
$\displaystyle{\lim_{\lambda \rightarrow \infty} K \ast p_{\lambda} (x)=K(x)}$ for $x \in \R^{N} \backslash \left\{ 0 \right\}$.

Therefore, assuming  Claim, it follows from Fatou's lemma that 
\begin{equation}\label{controlefinal}
\left(\int_{\R^{N}}| K(x)f|^{q}|x|^{-\beta q}\,\,dx\right)^{1/q} \lesssim 1  
\end{equation}
by letting $\lambda \rightarrow \infty$. However \eqref{controlefinal} holds if and only if $f=0$. Then $L(D)$ is cocanceling.


To prove the Claim we may adapt the arguments in \cite[Proposition 3.1]{BVS} as we sketch below. Write $K \ast p_{\lambda}(x)-K(x)=J_{1}(\lambda,x)-J_{2}(\lambda,x)$ where
\begin{align*}
J_{1}(\lambda,x)&:=\int_{\R^{N}}\left( K(x-y)-K(x) \right)\lambda^{N}\psi(\lambda y)\,dy,\\
J_{2}(\lambda,x)&:=\int_{\R^{N}} K(x-y) \lambda^{-N}\psi(\lambda^{-1}y)\,dy.
\end{align*}
Consider first $J_{1}(\lambda,x)$.  
Taking account of the decay of $\psi$ and choosing $N<\theta<N+1$ we have 
\begin{align*}
|J_{1}(\lambda,x)| & \leq \sup_{y \in \R^{N}} \left\{ {|\lambda y|^{\theta} |\psi(\lambda y)|} \right\}\,\, \int_{\R^{N}} |K(x-y)-K(x)|\frac{\lambda^{N}}{\lambda^{\theta}|y|^{\theta}}\,dy \\
& \lesssim \lambda^{N-\theta} \int_{\R^{N}} |K(x-y)-K(x)|\frac{1}{|y|^{\theta}}\,dy.
\end{align*}
To majorize the right hand side we observe that, since  $K(x)$ is homogeneous of degree $-N+\ell$ and smooth off the origin, we have the estimates
\begin{align*}
|K(x-y)-K(x)|&\lesssim \frac{|y|}{|x|^{N-\ell+1}},\quad |y|<|x|/2,\\ 
|K(x-y)-K(x)|&\lesssim \frac{1}{|x-y|^{N-\ell}}+\frac{1}{|x|^{N-\ell}},\quad
|y|\ge |x|/2.
\end{align*}
Hence
\begin{align*}
\int_{B_{|x|/2}} |K(x-y)-K(x)|\,\frac{1}{|y|^{\theta}}\,dy &\lesssim \frac{1 } {|x|^{N-\ell+1}} \int_{B_{|x|/2}} \frac{1}{|y|^{\theta-1}}\,dy
\lesssim \frac{1} {|x|^{\theta-\ell}} 
\end{align*}
and 
\begin{align*} 
\int_{\R^{N} \backslash B_{|x|/2}} |K(x-y)-K(x)|\,\frac{1}{|y|^{\theta}}\,dy
&\lesssim \int_{\R^{N} \backslash B_{|x|/2}}\frac{1}{|x-y|^{N-\ell}}
\frac{1}{|y|^{\theta}}\,dy+\frac{1}{|x|^{N-\ell}}\int_{\R^{N} \backslash B_{|x|/2}}  \frac{1}{|y|^{\theta}}\,dy\\
& \lesssim \frac{1}{|x|^{\theta-\ell}},
\end{align*}
since $\ell <N<\theta$. Thus
\begin{align*}\nonumber
|J_{1}(\lambda,x)|  & \lesssim \lambda^{N-\theta} \frac{1}{|x|^{\theta-\ell}}.
\end{align*}
To handle $J_2$ we choose $\ell<\kappa<N$ and get
\begin{align*}
|J_{2}(\lambda,x)| \lesssim \lambda^{\kappa-N} \int_{\R^{N}} \frac{1}{|x-y|^{N-\ell}} \frac{1}{|y|^{\kappa}}  dy \leq  \lambda^{\kappa-N}  \frac{1}{|x|^{\kappa - \ell}}.
\end{align*}
We conclude that 
\[
|\left(K \ast p_{\lambda}\right)(x)-K(x)| \leq |J_{1}(\lambda,x)|+|J_{2}(\lambda,x)| \rightarrow 0,\quad x \neq 0,
\]
as $\lambda \rightarrow \infty$.

\section{Failure of Theorem \ref{main1} for $\alpha=1$}\label{sec5}  

Let $\varphi \in C_{c}^{\infty}(B(0,1))$ a nonnegative function with $\int_{\R^N}\varphi(x)dx\neq 1$ and $\varphi_{\epsilon}(x)=\epsilon^{-N}\varphi(x/\epsilon)$ for $\epsilon>0$. Consider the vector field $\vec{f}_{\epsilon}$ on $\R^N$ with components  
$f_{1,\epsilon}(x)=\partial_{x_{2}}(\varphi_{\epsilon}(x))$,  $f_{2,\epsilon}(x)=-\partial_{x_{1}}(\varphi_{\epsilon}(x))$ and $f_{j,\epsilon}(x)=0$ for $j=3,...,N$
that clearly satisfies $div\, \vec{f}_{\epsilon}=0$ for all $\epsilon>0$. Since divergence  operator is a cocanceling operator and  assuming the inequality \eqref{main5} holds for $\alpha=1$ we have
\begin{equation}\label{ccexample}
 \left( \int_{\R^N} \left| I_{\ell}f_{\epsilon}(x) \right|^q |x|^{-\beta q} \;dx  \right)^{1/q} 
\lesssim  \sum_{j=1,2}\int_{\R^{N}} |x||f_{j,\epsilon}(x)|dx, \quad \quad \forall \,\epsilon >0. 
\end{equation}
However $f_{j,\epsilon}(x)=(-1)^{j+1}\epsilon^{-1}(\partial_{x_{j}}\varphi)_{\epsilon}(x)$ for $j=1,2$ thus
$$  \sum_{j=1,2}\int_{\R^{N}} |x||f_{j,\epsilon}(x)|dx \lesssim  \int_{\R^{N}} |x| |\nabla \varphi (x)|dx <\infty  $$
i.e. the right side hand of \eqref{ccexample} is finite independently of $\epsilon$. 

Now we may write $I_{\ell}f_{j,\epsilon}(x)=c_{N,\ell,j}(K_{j}\ast \varphi_{\epsilon})(x)$ where $K_{j}(x)=x_{j}/|x|^{-N+\ell-2}$ and taking $\epsilon \searrow 0$ we have
\begin{equation}
 \left( \int_{\R^N} |x_{j}|^{q} |x|^{(-N+\ell-2-\beta)q} \;dx  \right)^{1/q} 
\lesssim 1
\end{equation}
 that is a contradiction for $j=1,2$ since $(-N+\ell-2-\beta)q+N=0$.

\section{Applications}\label{sec6}

\subsection{Stein-Weiss inequality in $L^{1}$ norm for canceling operators} We present a short improvement of \cite[Theorem A]{HP} concerning on a version of Hardy-Littlewood-Sobolev inequality for elliptic and canceling homogeneous linear differential operators with $0<\alpha<1$.

\begin{corollary}
Let $A(D)$ be an elliptic homogeneous linear differential operator of order $\nu$ on $\R^{N}$, $N\ge2$, from $E$ to $F$ and assume that $0\leq \alpha<1$, $0<\ell<N$ and $\ell\le\nu$. If $A(D)$ is canceling then  the estimate
\begin{equation}\label{thp}
\left(\int_{\R^{N}}| (-\Delta)^{(\nu-\ell)/2}u(x)|^{q}|x|^{-N+(N-\ell-\alpha)q}\,dx\right)^{1/q}\leq C \int_{\R^{N}}|x|^{\alpha}|A(D)u(x)|\,dx,
\end{equation} 
holds for every $u \in C_{c}^{\infty}(\R^{N};E)$,  some $C>0$ and $q \in \left[ 1, \frac{N}{N+\alpha-\ell} \right[$ .
\end{corollary}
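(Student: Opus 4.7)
The strategy is to represent $(-\Delta)^{(\nu-\ell)/2}u$ as a convolution with a kernel $K$ that meets the hypotheses of the Fundamental Lemma~\ref{main}, and then invoke that lemma on the field $f:=A(D)u$, using the cocancellation associated with $A(D)$ to certify the admissibility condition.

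First I would associate to $A(D)$ a cocanceling operator. Because $A(D)$ is elliptic and canceling, the construction recalled before Definition~\ref{de1.1} supplies a homogeneous linear differential operator $L(D)$ such that $L(\xi)A(\xi)\equiv 0$ on $E$ for every $\xi\in\R^N\setminus\{0\}$, with $L(D)$ cocanceling in the sense of Definition~\ref{de1.1}. Consequently, for any $u\in C_c^\infty(\R^N;E)$ the vector field $f:=A(D)u\in C_c^\infty(\R^N;F)$ satisfies $L(D)f=0$ in $\mathcal{D}'(\R^N)$, so $f$ is admissible in Lemma~\ref{main}.

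Next I would construct the convolution kernel that realizes $(-\Delta)^{(\nu-\ell)/2}$ from $A(D)$. Define the $\mathcal{L}(F,E)$-valued multiplier
\[
\widehat{K}(\xi):=|\xi|^{\nu-\ell}\bigl(A(\xi)^{\ast}A(\xi)\bigr)^{-1}A(\xi)^{\ast},\qquad \xi\in\R^N\setminus\{0\}.
\]
By ellipticity, $A(\xi)^{\ast}A(\xi)\in\mathcal{L}(E)$ is invertible for every $\xi\neq 0$, so $\widehat K$ is smooth off the origin and positively homogeneous of degree $-\ell$. Since $0<\ell<N$, standard facts on Fourier transforms of homogeneous distributions imply that the inverse Fourier transform $K$ is a tempered distribution whose restriction to $\R^N\setminus\{0\}$ is a smooth, positively $(\ell-N)$-homogeneous function. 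By construction $\widehat K(\xi)A(\xi)=|\xi|^{\nu-\ell}\mathrm{Id}_E$, so
\[
(-\Delta)^{(\nu-\ell)/2}u=K\ast A(D)u=\int_{\R^N}K(\,\cdot-y)\,A(D)u(y)\,dy,\qquad u\in C_c^\infty(\R^N;E).
\]
Homogeneity combined with smoothness off the origin then gives the pointwise bounds $|K(x-y)|\lesssim |x-y|^{\ell-N}$ and $|\nabla K(x-y)|\lesssim |x-y|^{\ell-N-1}$, i.e.\ conditions \eqref{ka} and \eqref{kb2} (hence also \eqref{kb}).

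Finally, applying Lemma~\ref{main} with the kernel $K(x,y):=K(x-y)$, the cocanceling $L(D)$, and the field $f=A(D)u$ produces
\[
\left(\int_{\R^N}\bigl|(-\Delta)^{(\nu-\ell)/2}u(x)\bigr|^{q}|x|^{-\beta q}\,dx\right)^{1/q}\le C\int_{\R^N}|x|^{\alpha}|A(D)u(x)|\,dx,
\]
and the exponent on the weight on the left and the admissible range of $q$ in $[1,N/(N+\alpha-\ell))$ are dictated by the scaling identity $1/q=1+(\alpha+\beta-\ell)/N$ together with the constraints $\beta<N/q$ and $\alpha+\beta>0$ of Lemma~\ref{main}. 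The main technical point I expect to occupy is the rigorous construction of $K$ with the claimed homogeneity and regularity off the origin, together with the justification of the Fourier-side identity, which rests on the negative homogeneity of the multiplier and the ellipticity of $A(\xi)$.
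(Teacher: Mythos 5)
Your proposal follows essentially the same route as the paper: you form the homogeneous multiplier $|\xi|^{\nu-\ell}(A(\xi)^{\ast}A(\xi))^{-1}A(\xi)^{\ast}$ (the paper's $H(\xi)$), take its inverse Fourier transform to get a $(\ell-N)$-homogeneous kernel satisfying \eqref{ka}--\eqref{kb}, use the cocanceling operator $L(D)$ with $\ker L(\xi)=A(\xi)[E]$ supplied by Van Schaftingen's theory so that $f=A(D)u$ lies in $\ker L(D)$, and then invoke Lemma~\ref{main}. This matches the paper's argument in Subsection 6.1 in both structure and detail.
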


Here $g=(-\Delta)^{a/2}f$ is the positive fractional power of the Laplacian defined by the multiplier operator $\hat{g}(\xi)=|\xi|^{a}\hat{f}(\xi)$ for $f \in \mathcal{S}'(\R^N)$ and $a \geq 0$. The definition of canceling is presented at \eqref{canceling} and from \cite[Proposition 4.2]{VS} there exist a finite dimensional vector complex space V and an homogeneous differential operator $L(D)$ on $\R^N$ from $F$ to $V$ such that
\begin{equation}\label{cocan}
ker\, L(\xi)=A(\xi)[F]. 
 \end{equation}

In order to prove \eqref{thp} first we may write $(-\Delta)^{(\nu-\ell)/2}$ as a composition product with $A(D)$ as one of the factors. Consider the function $\xi\mapsto H(\xi)\in L(F,E)$ defined by
$H(\xi)=|\xi|^{\nu-\ell}(A^{\ast}\circ A)^{-1}(\xi)A^{\ast}(\xi)$
that is smooth in $\R^{N}\backslash \left\{0\right\} $ and homogeneous of 
degree $-\ell$. 
Since we are assuming that $0<\ell<N$ then  $H$ is a locally integrable  tempered distribution and its inverse Fourier transform $G(x)$, i.e
$\widehat G=H$, 
is a locally integrable tempered distribution homogeneous of degree $-N+\ell$ that satisfies
\begin{equation}\label{identity1}
(-\Delta)^{(\nu-\ell)/2}u(x)=\int_{\R^N}G(x-y)[A(D)u(y)]dy, \quad \quad  u \in C_{c}^{\infty}(\R^N;E).
\end{equation}
with $\displaystyle{\bigcap_{x \in \R^N \backslash \left\{ 0 \right\}} ker\, G(x)= \bigcap_{\xi \in \R^N \backslash \left\{ 0 \right\}} ker\, A^{*}(\xi)}$.
The proof follows directly from identity \eqref{identity1} and  Lemma \ref{main} applied to $f:=A(D)u$ that belongs to the kernel of cocanceling operator $L(D)$  given at  \eqref{cocan} and  $K(x,y):=G(x-y)$ that satisfies \eqref{ka} and \eqref{kb}.

\subsection{Stein-Weiss inequality in $L^{1}$ norm for divergence operator associated to vector fields}

The following result was stated by Zhang in \cite{Z} for divergence equation of vector fields:   

\begin{theorem}\cite[Theorem 1.2]{Z}
Let $N \geq 2$, $0 < \ell < N$, $0 \leq \alpha<1$, $\beta<N/q$, $\alpha+\beta>0$ and $\displaystyle{\frac{1}{q}=1+ \frac{\alpha+\beta-\ell}{N}}$. Suppose that $K(x)$ satisfies:
\begin{enumerate}
\item[(i)] $|K(x)|\leq C |x|^{\ell-N}, \quad |x| \neq 0$;
\item[(ii)] $\displaystyle{|K(x-y)-K(x)|\leq C \frac{|y|}{|x|^{N+1-\ell}}, \quad 2|y|\leq |x|}$.
\end{enumerate}
Then if 
$$T_{\ell}f:=\int_{\R^N} K(x-y)f(y) \; dy$$
then there exists $C>0$ such that
\begin{equation}\nonumber
 \displaystyle{\| |x|^{-\beta}T_{\ell}f \|_{L^{q}} 
\leq C\| |x|^{\alpha} f\|_{L^{1}} + \| |x|^{\alpha}\nabla (-\Delta)^{-1}div\, f\|_{L^{1}}} .
\end{equation}
\end{theorem}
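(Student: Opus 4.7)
The plan is to apply Lemma \ref{main} twice after a Helmholtz-type decomposition of $f$. Set $g:=\nabla(-\Delta)^{-1} div\, f$. A symbol computation gives $div\, g=\Delta(-\Delta)^{-1}div\, f=-div\, f$, so $u:=f+g$ is divergence-free, while $g$ itself, being a pure gradient, satisfies $\partial_i g_j-\partial_j g_i=0$ for all $1\le i<j\le N$. Writing $f=u-g$ and using linearity of $T_\ell$,
\[
\||x|^{-\beta} T_\ell f\|_{L^q}\le \||x|^{-\beta} T_\ell u\|_{L^q}+\||x|^{-\beta} T_\ell g\|_{L^q}.
\]

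For the first summand I would invoke Lemma \ref{main} with $L(D)=div$, which is cocanceling as noted right after Definition \ref{de1.1}. Since the hypotheses (i)--(ii) on $K$ are exactly \eqref{ka}--\eqref{kb}, Lemma \ref{main} applies and yields
\[
\||x|^{-\beta} T_\ell u\|_{L^q}\lesssim \||x|^\alpha u\|_{L^1}\le \||x|^\alpha f\|_{L^1}+\||x|^\alpha g\|_{L^1}.
\]
For the second summand, I observe that the exterior differential $\widetilde L(D)v:=(\partial_i v_j-\partial_j v_i)_{1\le i<j\le N}$ has symbol $\widetilde L(\xi)[e]=\xi\wedge e$ with kernel $\mathbb R\xi$, so $\bigcap_{\xi\neq 0}\ker \widetilde L(\xi)=\{0\}$ and $\widetilde L(D)$ is cocanceling. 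Because $g$ is curl-free, $\widetilde L(D)g=0$, and a second application of Lemma \ref{main} gives $\||x|^{-\beta} T_\ell g\|_{L^q}\lesssim \||x|^\alpha g\|_{L^1}$. Summing the two bounds yields the desired inequality, with at most a harmless factor of $2$ on the term involving $\nabla(-\Delta)^{-1} div\, f$.

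The main obstacle is that Lemma \ref{main} is stated for $f\in C_c^\infty$, whereas $u$ and $g$ are only smooth with polynomial decay at infinity --- the Riesz-transform-type correction hidden inside $g$ does not preserve compact support. I would handle this via a truncation/density argument, approximating $u$ and $g$ by compactly supported fields lying in the respective kernels of $div$ and the exterior differential while controlling the weighted $L^1$ norms. Alternatively one can revisit the underlying Lemma \ref{mainlemma}: its integration-by-parts step only requires enough decay of $f$ at infinity for boundary terms to vanish together with $L(D)f=0$ distributionally, both of which hold in the present setting. I expect this technical extension, rather than the algebraic decomposition itself, to be the delicate part of the argument.
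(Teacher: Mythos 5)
Your proposal is correct and follows the same fundamental strategy as the paper --- Helmholtz decomposition plus Lemma~\ref{main} --- but it is actually \emph{more} complete than the paper's own one-line argument. The paper defines $g:=f-\nabla(-\Delta)^{-1}\mathrm{div}\,f$, asserts $\mathrm{div}\,g=0$, and says the result follows ``by density'' from a single application of Lemma~\ref{main} with $L(D)=\mathrm{div}$. Two points are glossed over there. First, there is a sign slip: since $\mathrm{div}\bigl(\nabla(-\Delta)^{-1}\mathrm{div}\,f\bigr)=\Delta(-\Delta)^{-1}\mathrm{div}\,f=-\mathrm{div}\,f$, the divergence-free field is $f+\nabla(-\Delta)^{-1}\mathrm{div}\,f$ (your $u$), not $f-\nabla(-\Delta)^{-1}\mathrm{div}\,f$. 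Second, and more substantially, applying Lemma~\ref{main} to the div-free part only bounds $\||x|^{-\beta}T_\ell u\|_{L^q}$; one must still control $\||x|^{-\beta}T_\ell g\|_{L^q}$ for the gradient piece $g=\nabla(-\Delta)^{-1}\mathrm{div}\,f$. You supply exactly the missing ingredient: $g$ is curl-free, and the exterior derivative $\widetilde L(D)=(\partial_iv_j-\partial_jv_i)_{i<j}$ is cocanceling (its symbol has kernel $\mathbb R\xi$, whose intersection over $\xi\neq 0$ is trivial when $N\ge2$), so Lemma~\ref{main} applies to $g$ as well, yielding $\||x|^{-\beta}T_\ell g\|_{L^q}\lesssim\||x|^\alpha g\|_{L^1}$. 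Summing gives the claim with an innocuous factor of two. Your final paragraph correctly identifies the remaining technical point --- $u$ and $g$ are smooth with decay but not compactly supported, so one must either approximate by compactly supported fields in the kernels of $\mathrm{div}$ and $\widetilde L$ (controlling the weighted $L^1$ norms), or check that the integration-by-parts step in Lemma~\ref{mainlemma} extends to Schwartz-type decay --- and this is precisely what the paper's terse ``by density'' hides.
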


The result is a direct consequence of Theorem \ref{main1} taking by density $g:=f-\nabla (-\Delta)^{-1}\text{div}\, f$ 
that satisfies $L(D)g=0$ for cocanceling operator $L(D):=\text{div}$. Here
$K(x,y):=K(x-y)$ is the convolution kernel which properties \eqref{ka} and \eqref{kb}.

\subsection{Stein-Weiss inequality in Hardy spaces $H^{1}$} 
 Schikorra, Spector and Van Schaftingen proved in \cite{SSV} a very interesting inequality in $L^{1}$ norm associated to Riesz transform:

\begin{theorem}\cite[Theorem A]{SSV}
Let $N \geq 2$ and $0<\ell<N$. Then there exists a constant $C=C(\ell,N)>0$ such that
\begin{equation}\label{riesz}
 \displaystyle{ \|  I_{\ell}u \|_{L^{N/(N-\ell)}}  \leq C\| Ru\|_{L^{1}}}.
\end{equation}
for all $u \in C_{c}^{\infty}(\R^{N})$ such that $Ru \in L^{1}(\R^{N};\R^{N})$.
\end{theorem}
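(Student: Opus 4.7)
The natural plan is to represent $I_\ell u$ as a vector-valued singular-integral-type operator applied to the vector field $Ru$, verify that $Ru$ is curl-free (hence lies in the kernel of a cocanceling operator), and then invoke the machinery of Theorem \ref{main1} at the critical endpoint $\alpha=\beta=0$.

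First I would use the Fourier identity $\sum_{j=1}^{N} R_j^2 = -\mathrm{Id}$ to write, for $u \in C_c^\infty(\R^N)$,
\[
u = -\sum_{j=1}^{N} R_j\,(Ru)_j.
\]
Applying $I_\ell$ and using that $I_\ell \circ R_j$ is (up to constants) the convolution with $K_j(z) := c_{N,\ell}\, z_j / |z|^{N-\ell+1}$, one obtains
\[
I_\ell u (x) = \int_{\R^N} K(x-y)\cdot Ru(y)\,dy, \qquad K(z)=c\,\frac{-z}{|z|^{N-\ell+1}}.
\]
The $F=\R^N$-valued kernel $K$ satisfies $|K(x-y)|\leq C|x-y|^{\ell-N}$ and the Lipschitz bound (\ref{kb}), thereby fitting the framework of Lemma \ref{main}.

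Second, I would verify the cocanceling structure. In Fourier space, $\widehat{(Ru)_j}(\xi) = -i\xi_j|\xi|^{-1}\hat u(\xi)$, so $i\xi_i\widehat{(Ru)_j}(\xi)-i\xi_j\widehat{(Ru)_i}(\xi)=0$, which means $\operatorname{curl}(Ru)=0$ in the sense of distributions. Taking $L(D)=\operatorname{curl}$ from $F=\R^N$ to $V=\Lambda^2\R^N$, one has $\ker L(\xi)=\operatorname{span}(\xi)$ for $\xi\neq 0$, and therefore $\bigcap_{\xi\neq 0}\ker L(\xi)=\{0\}$, so $L(D)$ is cocanceling in the sense of Definition \ref{de1.1}. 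Thus $f:=Ru$ is admissible for Theorem \ref{main1}.

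Third, I would apply Theorem \ref{main1} (equivalently Lemma \ref{main}) to the kernel $K$ and the vector field $f=Ru$ with $\alpha=\beta=0$ and $q=N/(N-\ell)$: these parameters satisfy the scaling relation $\tfrac{1}{q}=1+\tfrac{\alpha+\beta-\ell}{N}$, giving
\[
\|I_\ell u\|_{L^{N/(N-\ell)}(\R^N)} \leq C\,\|Ru\|_{L^1(\R^N)},
\]
which is the desired inequality.

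The hard part will be the endpoint condition $\alpha+\beta=0$. Theorem \ref{main1} and Lemma \ref{main} are stated under the strict hypothesis $\alpha+\beta>0$; at $\alpha+\beta=0$ the $J_2$-estimate in the proof of Lemma \ref{main} breaks down, because $(N-\ell)q=N$ makes the kernel $|x-y|^{-(N-\ell)q}$ non-integrable near the diagonal on $B_{2|y|}$. Two routes to bypass this seem plausible: \emph{(a)} apply Lemma \ref{main} with $\alpha=\varepsilon>0$, $\beta=0$, obtaining the weighted inequality with exponent $q_\varepsilon=N/(N+\varepsilon-\ell)\nearrow N/(N-\ell)$, and then pass to the limit $\varepsilon\to 0^+$ via Fatou's lemma, the main issue being uniform control of the constant; \emph{(b)} keep the splitting $K=K_1+K_2$ of the proof of Lemma \ref{main} and treat $J_2$ by a further integration by parts that exploits the gradient structure $Ru=\nabla\bigl(-(-\Delta)^{-1/2}u\bigr)$, transferring one derivative from the curl-free field to the kernel to gain the missing integrability. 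Either route reduces the remaining estimates to the cocanceling identity $(\dagger)$ of Lemma \ref{mainlemma} together with the two-weight lemma \ref{weight1} already developed in Sections \ref{two} and \ref{sec3}.
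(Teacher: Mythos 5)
The statement you were asked to prove is not re-proved in the paper: it is quoted as \cite[Theorem A]{SSV} and used as a black box. What the paper \emph{does} prove, in the theorem immediately following, is the weighted extension with $\alpha+\beta>0$, and that proof is essentially your first three steps: $Ru$ is curl-free, $\mathrm{curl}$ is cocanceling, and one applies Theorem \ref{main1}. Your setup therefore matches the paper's argument for the weighted version precisely.

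The gap you honestly flag at the end is, however, a genuine and fatal one for the unweighted statement. At $\alpha=\beta=0$ one has $(N-\ell)q=N$, so in the $J_2$-estimate of Lemma \ref{main} the local integral
\[
\int_{B_{2|y|}}\frac{dx}{|x-y|^{(N-\ell)q}\,|x|^{\beta q}}
\]
diverges logarithmically on the diagonal. More to the point, for $\alpha+\beta>0$ this integral evaluates (near $x=y$) to a constant of order $\bigl((\alpha+\beta)q\bigr)^{-1}|y|^{(\alpha+\beta)q}$, so the constant $\widetilde C$ of Lemma \ref{main} blows up as $\alpha+\beta\to 0^{+}$. Consequently route (a) cannot work: no Fatou-type passage to the limit in $\varepsilon$ can recover the endpoint when $C_\varepsilon\to\infty$. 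Route (b) is the right instinct — the endpoint $L^1\to L^{N/(N-\ell)}$ bound genuinely requires exploiting the differential constraint on $f$ also in the $J_2$ (far-field/near-diagonal) region, not just in $J_1$ — but a one-line ``further integration by parts'' is not a proof; it is, in substance, exactly the nontrivial duality/cocanceling argument of \cite{SSV}, which the paper's Lemma \ref{main} does not reproduce. In short: the reduction to a curl-free vector field and the cocancellation of $\mathrm{curl}$ are correct and match the paper; but deducing the unweighted endpoint estimate from Theorem \ref{main1} is not possible, and the proposed repairs leave the actual endpoint argument unsupplied.
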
 
\noindent Here $Ru:=(R_{1}u,...R_{N}u)$ is the vector Riesz transform where $R_{j}u(x):=(G_{j}\ast u)(x)$ is the j-Riesz transform where $\displaystyle{G_{j}(x)=c_{N}\frac{y_{j}}{|y|^{N+1}}}$ with $c_{N}:=\pi^{-\frac{N+1}{2}}\Gamma\left( \frac{N+1}{2}\right)$. 

In particular the inequality 
\eqref{riesz} recover the boundedness of Riesz potencial operator from $H^{1}(\R^N)$ to $L^{N/(N-\ell)}(\R^N)$, since the norm in  $\|f\|_{H^{1}}$ is equivalent to $\|f\|_{L^{1}}+\|Ru\|_{L^1}$.
Next we present the following extension of previous result

\begin{theorem}
Consider $0< \ell<N$, {$0\leq \alpha<1$}, $\alpha+\beta>0$ and  $\displaystyle{\frac{1}{q}=1+\frac{\alpha+\beta-\ell}{N}}$. Then 
\begin{equation}\nonumber
 \displaystyle{ \|  |x|^{-\beta} I_{\ell}u(x) \|_{L^{N/(N-\ell)}}  \leq C\| |x|^{\alpha} Ru(x)\|_{L^{1}}}
\end{equation}
for all $u \in C_{c}^{\infty}(\R^{N})$ such that $Ru \in L^{1}(\R^{N};\R^{N},|x|^{\alpha}dx)$.
\end{theorem}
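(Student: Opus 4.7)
The plan is to reduce the inequality to Theorem \ref{main1} (equivalently Lemma \ref{main}) by expressing $I_\ell u$ as an integral operator applied to $Ru$ and observing that $Ru$ is annihilated by the curl operator, which is cocanceling.

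First, I would use the pointwise identity $u = -\sum_{j=1}^N R_j^2 u$, valid on $C_c^\infty(\R^N)$, to write
$$
I_\ell u(x) \;=\; -\sum_{j=1}^{N} (I_\ell R_j)(R_j u)(x) \;=\; \int_{\R^N} K(x-y)\cdot Ru(y)\,dy,
$$
where $K = -(K_1,\ldots,K_N)$ and $K_j$ is the convolution kernel of $I_\ell R_j$. The Fourier symbol of $I_\ell R_j$ is a constant multiple of $i\xi_j/|\xi|^{\ell+1}$, so $K_j(x) = c_j\, x_j/|x|^{N-\ell+1}$ is smooth off the origin and homogeneous of degree $-(N-\ell)$. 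A short computation verifies that $\widetilde K(x,y):=K(x-y)$ satisfies the kernel conditions \eqref{ka} and \eqref{kb} required by Lemma \ref{main}.

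Second, I would set $f:=Ru = \nabla(-\Delta)^{-1/2}u$ and observe that $f$ is curl-free, i.e.\ $L(D)f = 0$ distributionally for the operator $L(D) = \mathrm{curl}$, namely $(f_1,\ldots,f_N)\mapsto (\partial_i f_j - \partial_j f_i)_{i<j}$ from $F=\R^N$ to $V=\R^{N(N-1)/2}$. Its symbol $L(\xi)[e] = (\xi_i e_j - \xi_j e_i)_{i<j}$ has $\ker L(\xi) = \R\xi$, whence $\bigcap_{\xi \neq 0}\ker L(\xi) = \{0\}$ and $L(D)$ is cocanceling in the sense of Definition \ref{de1.1}. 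With these pieces in place, applying Lemma \ref{main} to the kernel $\widetilde K$, the vector field $f=Ru$, and the cocanceling operator $\mathrm{curl}$ would give the claimed inequality.

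The main obstacle I foresee is that Lemma \ref{main} is formulated for $f\in C_c^\infty$, whereas $Ru$ is generally not compactly supported. However, the only place where compact support of $f$ is used in that proof is through Lemma \ref{mainlemma}, which is applied to the \emph{compactly supported} test function $\varphi(y)=\psi(y/|x|)\eta$. Inspecting the proof of Lemma \ref{mainlemma}, the crucial identity
$$
\int_{\R^N} L^*(D)\bigl(P(y)\varphi(y)\bigr)\cdot f(y)\,dy \;=\; \langle L(D)f,\,P\varphi\rangle \;=\; 0
$$
holds by distributional duality whenever $P\varphi$ is a smooth compactly supported test function (as it is here, since $\varphi$ is compactly supported in $y$) and $L(D)f = 0$ in the distributional sense, with no compactness hypothesis on $f$. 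The remainder $\int T_\varphi\cdot f\,dy$ converges absolutely because $T_\varphi$ is compactly supported in $y$ and $f\in L^1_{loc}$. Likewise, the $J_2$ piece in the proof of Lemma \ref{main} invokes only Minkowski's inequality together with the weighted integrability $f\in L^1(|x|^\alpha dx)$. Hence Lemma \ref{main} extends to curl-free $f \in L^1_{loc}(\R^N;\R^N)\cap L^1(\R^N;\R^N,|x|^\alpha dx)$, and applied to $f = Ru$ yields the stated weighted Stein--Weiss estimate.
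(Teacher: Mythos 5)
Your argument is correct and follows the paper's route---identify $Ru$ as a curl-free vector field and invoke the cocanceling machinery of Lemma \ref{main}---but you supply all the details the paper's one-line proof omits: the kernel identity $I_\ell u = \int K(x-y)\cdot Ru(y)\,dy$ with $K_j(x) = c_j\,x_j\,|x|^{\ell-N-1}$ coming from $u=-\sum_j R_j^2 u$, the verification of conditions \eqref{ka}--\eqref{kb}, and the computation that $\operatorname{curl}$ has $\ker L(\xi)=\R\xi$ and hence is cocanceling. Your handling of the non-compact support of $Ru$ is exactly the right fix: as you observe, in the proof of Lemma \ref{mainlemma} the distributional pairing $\langle L(D)f,\,P\varphi\rangle = 0$ needs compactness only of the test function $P\varphi$ (not of $f$), and the $J_2$ estimate in Lemma \ref{main} uses only $f\in L^1(|x|^\alpha\,dx)$, so the lemma extends to curl-free $f\in L^1_{loc}\cap L^1(|x|^\alpha\,dx)$, a density point the paper glosses over.
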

\begin{proof} Follows directly from the property $\partial_{x_{i}}R_{j}u - \partial_{x_{j}}R_{i}u=0$ for $i \neq j$ i.e. the Riesz transform $Ru$ belongs to the kernel of curl operator from 1-form space $\Lambda^{1}(\R^N)$ to 2-form space $\Lambda^{2}(\R^N)$ that is a cocanceling operator.
\end{proof}

\medskip

\textbf{Acknowledgment}: The authors want to thank professor Carlos Pérez Moreno for some interesting conversations of the subject of weighted 
inequalities for fractional integrals.

\end{document}